\newcommand{\sym}[1]{\mbox{{\bf sym}($#1$)}}
\newdimen\pHeight
\newdimen\pLower
\newdimen\pLineWidth
\newdimen\pKern
\newdimen\pIR
\newsavebox{\Cbox}
\newsavebox{\vertCmplx}
\newdimen\Cheight
\newdimen\Cwidth
\sbox{\Cbox}{\rm C}
\sbox{\vertCmplx}{\rule[\pLower]{\pLineWidth}{\Cheight}}
\sbox{\Cbox}{\usebox{\Cbox}\kern\pKern\usebox{\vertCmplx}}
\def\R{{\rm I\kern\pIR R}}
\newcommand{\inv}[2]{\mbox{${#1} \in
\R^{\hspace*{-.01in} #2}$}}  
\newcommand{\inm}[3]
   {\mbox{${#1} \in \R^{\hspace*{-.015in}{#2} \times
{#3}}\hspace*{-.05in}$ }}
\newcommand{\norm}[1]{\mbox{$\|\: #1 \: \|$}}
\newcommand{\T}{\raisebox{1pt}{$ \:\otimes \:$}}
\newenvironment{code}{\begin{tabbing}\hspace*{.3in} \=
\hspace*{.3in} \= \hspace*{.3in} \= \hspace*{.3in} \=
\hspace*{.3in} \= \hspace*{.3in} \= \kill }{\end{tabbing}}
\definecolor{color17}{rgb}{.8,0.0,1}
\definecolor{dark_green}{rgb}{0.0, 0.6, 0.0}
\definecolor{orange}{rgb}{1,0.5,0}
\definecolor{brown}{cmyk} {0, 0.8, 1, 0.6}
\definecolor{violet}{rgb}{0.54, 0.17, 0.89}
\newcommand{\smallE}{\mbox{$\scriptscriptstyle E$}}
\newcommand{\smallN}{\mbox{$\scriptscriptstyle N$}}
\newcommand{\smallPlus}{\mbox{$\scriptscriptstyle +$}}
\newcommand{\smallMinus}{\mbox{$\scriptscriptstyle -$}}
\renewcommand{\sym}{\mbox{\tiny sym}}
\renewcommand{\skew}{\mbox{\tiny skew}}
\newcommand{\unfold}[4]{\mbox{\tiny $[#1,\!#2]\!\!\times\! \![#3,\!#4]$}}
\newcommand{\nn}{\mbox{\tiny $nn$}}
\newcommand{\symn}{\mbox{$\mbox{\tt sym}_{\hspace*{.8pt}\mbox{\footnotesize  \tt  n}}$}}
\newcommand{\skewn}{\mbox{$\mbox{\tt skew}_{\hspace*{.8pt}\mbox{\footnotesize  \tt  n}}$}}
\title{Approximating Matrices with Multiple Symmetries}
\author{Charles F. Van Loan\thanks{Department of Computer Science, Cornell University,
        Ithaca, NY 14853, {\tt cv@cs.cornell.edu}. }
\and
Joseph P. Vokt\thanks{Department of Computer Science, Cornell University, 
        Ithaca, NY 14853, {\tt jpv52@cornell.edu}.}
}
\begin{document}
\maketitle
\begin{abstract}
If a tensor with various symmetries is properly unfolded, then the resulting matrix inherits those symmetries.
As tensor computations become increasingly important it is imperative that we develop efficient structure preserving methods for matrices with multiple symmetries. In this paper we consider how to exploit and preserve structure in the pivoted Cholesky factorization when approximating a matrix $A$ that is both symmetric ($A=A^T$) and what we call {\em perfect shuffle symmetric}, or {\em perf-symmetric}. The latter property means that $A = \Pi A\Pi$ where
$\Pi$ is a permutation with the property that $\Pi v = v$ if $v$ is the vec of a symmetric matrix and
$\Pi v = -v$ if $v$ is the vec of a skew-symmetric matrix. Matrices with this structure can arise when an order-4
tensor $\cal A$ is unfolded and its elements satisfy ${\cal A}(i_{1},i_{2},i_{3},i_{4}) = 
{\cal A}(i_{2},i_{1},i_{3},i_{4}) ={\cal A}(i_{1},i_{2},i_{4},i_{3}) ={\cal A}(i_{3},i_{4},i_{1},i_{2}).$ This is the
case in certain quantum chemistry applications where the tensor entries are electronic repulsion integrals.
Our technique involves a closed-form block diagonalization followed by one or two half-sized pivoted
Cholesky factorizations. This framework allows for a lazy evaluation feature that is important if the entries in $\cal A$ are expensive to compute. In addition to being a structure preserving rank reduction technique, we find that this approach for obtaining the Cholesky factorization reduces the work by up to a factor of 4.

\end{abstract}

\begin{keywords} 
tensor, symmetry, multilinear product, low-rank approximation 
\end{keywords}

\begin{AMS}
15A18, 15A69, 65F15
\end{AMS}
\pagestyle{myheadings}
\thispagestyle{plain}
\markboth{C.F. VAN LOAN AND J.P. VOKT}{Approximating Matrices with Multiple Symmetries }

\section{Introduction}

Low-rank approximation and the exploitation of structure are important themes throughout  matrix computations.
This paper revolves around some basic tensor calculations that reinforce this point. The tensors involved have
multiple symmetries and the same can be said of the matrices that arise if they are obtained by a suitable unfolding. 
\subsection{Motivation}
Our contribution is prompted by the following problem.
Suppose  
${\cal A}\in \R^{n \times n \times n \times n} $
is an order-4 tensor
with the property that its entries satisfy
\begin{equation}
\mathcal{A}(i_{1},i_{2},i_{3},i_{4})  \:=\: 
\left\{
\begin{array}{c} 
\mathcal{A}(i_{2},i_{1},i_{3},i_{4})\\
 \mathcal{A}(i_{1},i_{2},i_{4},i_{3}) \rule{0pt}{14pt}\\
 \mathcal{A}(i_{3},i_{4},i_{1},i_{2}) \rule{0pt}{14pt}
\end{array}.
\right.
\end{equation}
We say that such a tensor is ((1,2),(3,4))-{\em symmetric}.  See {\sc Fig} 1.1 for 
an $n=3$ example.
Given \inm{X}{n}{n} the challenge is to
compute efficiently the tensor
${\cal B} \in \R^{n \times n \times n \times n}\rule{0pt}{12pt}$ defined by
\begin{equation}
 {\cal B}(i_{1},i_{2},i_{3},i_{4}) \;=\!
 \sum_{j_{1},j_{2},j_{3},j_{4}=1}^{n}  \!\!
{\cal A}(j_{1},j_{2},j_{3},j_{4})
X(i_{1},j_{1}) X(i_{2},j_{2})X(i_{3},j_{3})X(i_{4},j_{4}).
\end{equation}
This is a highly structured {\em multilinear product}.   As with many tensor computations, (1.2) can be 
reformulated as a matrix computation. In particular, it can be shown that
\begin{equation}
B = (X \T X)A(X \T X)^{T}
\end{equation}
where $A$ and $B$ are $n^{2}$-by-$n^{2}$ matrices that are obtained by certain unfoldings of  the tensors
$\cal A$ and $\cal B$. Depending upon the chosen unfolding, the matrices $A$ and $B$  inherit the  tensor symmetries (1.1).

\begin{figure}
\begin{center}
\footnotesize
\begin{tabular}{|c|cccccccc|}
\hline
\footnotesize Value &\multicolumn{8}{|c|}{\footnotesize Entries that Share that Value $\rule[-4pt]{0pt}{14pt}$}\\
\hline
1 & (1,1,1,1) & & & & & & &  $\rule{0pt}{8pt}$ \\
2 & (2,1,1,1) & (1,2,1,1) & (1,1,2,1) & (1,1,1,2) & & & & \\
3 & (3,1,1,1) & (1,3,1,1) & (1,1,3,1) & (1,1,1,3) & & & &\\
4 & (2,2,1,1) & (1,1,2,2) & & & & & &  \\
5 & (3,2,1,1) & (2,3,1,1) & (1,1,3,2) & (1,1,2,3) & & & &\\
6 & (3,3,1,1) & (1,1,3,3) & & & & & & \\
7 & (2,1,2,1) & (1,2,2,1) & (2,1,1,2) &  (1,2,1,2) & & & & \\
8& (1,3,2,1) & (3,1,2,1) & (1,3,1,2) & (3,1,1,2) & (2,1,1,3) & (1,2,1,3) & (2,1,3,1) & (1,2,3,1)\\
9 & (2,2,2,1) & (2,2,1,2) & (2,1,2,2) & (1,2,2,2)& & & &\\
10 & (3,2,2,1) & (2,3,2,1) & (3,2,1,2) & (2,3,1,2) & (2,1,3,2) & (1,2,3,2) & (2,1,2,3) & (1,2,2,3)    \\
11 & (3,3,2,1) & (3,3,1,2) & (2,1,3,3) & (1,2,3,3) & & & &\\
12 & (3,1,3,1) & (1,3,3,1) & (3,1,1,3) & (1,3,1,3) & & & &\\
13 & (2,2,3,1) & (2,2,1,3) & (3,1,2,2) & (1,3,2,2) & & & &\\
14 & (3,2,3,1) & (2,3,3,1) & (3,2,1,3) & (2,3,1,3) & (3,1,3,2) & (1,3,3,2) & (3,1,2,3) & (1,3,2,3) \\
15 & (3,3,3,2) & (3,3,2,3) & (3,2,3,3) & (2,3,3,3) & & & &\\
16 & (2,2,2,2) & & & & & & &\\
17 & (3,2,2,2) & (2,3,2,2) & (2,2,3,2) & (2,2,2,3) & & & &\\
18 & (3,3,2,2) & (2,2,3,3) & & & & & &  \\
19 & (3,2,3,2) & (2,3,3,2) & (3,2,2,3) & (2,3,2,3)& & & & \\
20 & (3,2,3,3) & (2,3,3,3) & (3,3,3,2) & (3,3,2,3) & & & &\\
21 & (3,3,3,3) & & & & & & & $\rule[-4pt]{0pt}{6pt}$\\
\hline
\end{tabular}
\end{center}
\caption{An example of a \mbox{\rm ((1,2),(3,4))}-symmetric tensor 
\mbox{\rm ($n = 3$)}. It has at most 21 distinct values. Equations (1.5) and (1.7) show what this tensor  looks like when unfolded into a $9\times 9$ matrix. In general, the subspace of $\R^{n\times n \times n \times n}$
defined by  all  \mbox{\rm ((1,2),(3,4))}-symmetric tensors
has dimension $(n^4 + 2n^{3} + 3n^{2} +2n)/8$.
}
\end{figure}
For example, suppose  $A = {\cal A}_{\unfold{1}{3}{2}{4}}$
is the ``$[1,3]\times[2,4]$ unfolding''  defined by

\medskip

\begin{equation}
{\cal A}(i_{1},i_{2},i_{3},i_{4}) \; \rightarrow \; A(i_{1}+(i_{3}-1)n,i_{2}+(i_{4}-1)n).
\end{equation}

\medskip

\noindent
This $n^{2}$-by-$n^{2}$ matrix can be regarded as $n$-by-$n$ block matrix $A = (A_{pq})$ whose blocks $A_{pq}$ 
are $n$-by-$n$ matrices.
It follows from (1.4) that 
\[
{\cal A}(i_{1},i_{2},i_{3},i_{4}) = [A_{i_{3},i_{4}}]_{i_{1},i_{2}}.
\]
Combining this with (1.1)  we conclude that $A_{qp} = A_{pq} = A_{pq}^{T}$. Note that this implies $A^{T}=A$.
 To visualize the structure associated with the $[1,3]\times[2,4]$ unfolding, suppose that ${\cal A} \in \R^{3\times 3\times 3 \times 3}$ is defined by {\sc Fig} 1.1. It follows that

\small
\begin{equation}
{\cal A}_{\unfold{1}{3}{2}{4}}  \:=\: 
\left[ \begin{array}{ccc|ccc|ccc}
 1 & 2 & {3}  &2 & 7  &{8} & 3  &8  &{12} \\
 2 & 4 & 5  & 17 & 19 &  10 & 8 & 13 & 14  \rule{0pt}{11pt}\\
 3 & 5 & 6 & 8  & 10 &  11 & 12 & 14 & 15  \rule{0pt}{11pt}\\
\hline
 2 & 7 & {8} & 4  & 9 & {13} & 5 & 10 & {14} \rule{0pt}{11pt} \\
 7 &9 & 10 & 9 &  16  & 17 & 10 & 17 &  19 \rule{0pt}{11pt}\\
 8 & 10 & 11 & 13 & 17 & 18 & 14 & 19 & 20 \rule{0pt}{11pt}\\
\hline
 3 & 8 & {12} & 5 & 10  & {14} & 6 & 11 & {15} \rule{0pt}{11pt}\\
 8 & 13 & 14 & 10 & 17 & 19 & 11 & 18 & 20 \rule{0pt}{11pt}\\
 12 & 14 & 15 & 14 & 19 & 20 & 15 & 20 & 21 \rule{0pt}{11pt}
\end{array}\right].
\end{equation}
\normalsize

\bigskip

\noindent
On the other hand, the $[\:1,2\:]\!\times\! [\:3,4\:]$ unfolding $A = {\cal A}_{\unfold{1}{2}{3}{4}}$
defined by
\begin{equation}
{\cal A}(i_{1},i_{2},i_{3},i_{4}) \; \rightarrow \; A(i_{1}+(i_{2}-1)n,i_{3}+(i_{4}-1)n)
\end{equation}

\noindent
results in a matrix $A$ with different properties. Indeed, if we apply this mapping to the tensor defined in {\sc Fig} 1.1,  then we obtain

\medskip

\small
\begin{equation}
{\cal A}_{\unfold{1}{2}{3}{4}}  \:=\:
\left[ \begin{array}{ccc|ccc|ccc}
 1 & 2 & 3 & 2 & 4 & 5 & 3 & 5 & 6  \\
 2 & 7 & 8 & 7 & 9 & 10 & 8 & 10 & 11 \rule{0pt}{11pt} \\
 3 & 8 & 12 & 8 & 13 & 14 & 12 & 14 & 15 \rule{0pt}{11pt} \\
\hline
 2 & 7 & 8 & 7 & 9 & 10 & 8 & 10 & 11  \rule{0pt}{11pt}\\
 4 & 9 & 13 & 9 & 16 & 17 & 13 & 17 & 18 \rule{0pt}{11pt} \\
 5 & 10 & 14 & 10 & 17 & 19 & 14 & 19 & 20  \rule{0pt}{11pt}\\
\hline
 3 & 8 & 12 & 8 & 13 & 14 & 12 & 14 & 15  \rule{0pt}{11pt}\\
 5 & 10 & 14 & 10 & 17 & 19 & 14 & 19 & 20  \rule{0pt}{11pt}\\
 6 & 11 & 15 & 11 & 18 & 20 & 15 & 20 & 21 \rule{0pt}{11pt}
\end{array}\right].
\end{equation}

\normalsize

\medskip

\noindent
It is easy to prove that this unfolding is also symmetric. (Just combine (1.6) with the observation that 
 ${\cal A}(i_{1},i_{2},i_{3},i_{4}) = {\cal A}(i_{3},i_{4},i_{1}, i_{2})$.) 
But it also satisfies a type of symmetry that is related to a particular  perfect shuffle permutation.
To see this we define the $n^{2}$-by-$n^{2}$ permutation matrix $\Pi_{\nn}$ by
\begin{equation}
\Pi_{\nn} \:=\: I_{n^{2}}(:,p),\qquad
p \;=\; [ \: 1:n:n^2 \;|\;  2:n:n^2 \;|\; \cdots \;|\; n:n:n^2 \:]
\end{equation}
where we are making use of the {\sc Matlab} colon notation. Here is an example:
\begin{equation}
\Pi_{33} \:=\:
\left[
\begin{array}{ccc|ccc|ccc}
1 & 0 & 0 & 0 & 0 & 0 & 0 & 0 & 0 \\
0 & 0 & 0 & 1 & 0 & 0 & 0 & 0 & 0 \rule{0pt}{11pt} \\
0 & 0 & 0 & 0 & 0 & 0 & 1 & 0 & 0 \rule{0pt}{11pt} \\  \hline
0 & 1 & 0 & 0 & 0 & 0 & 0 & 0 & 0 \rule{0pt}{11pt} \\
0 & 0 & 0 & 0 & 1 & 0 & 0 & 0 & 0 \rule{0pt}{11pt} \\
0 & 0 & 0 & 0 & 0 & 0 & 0 & 1 & 0 \rule{0pt}{11pt} \\ \hline
0 & 0 & 1 & 0 & 0 & 0 & 0 & 0 & 0 \rule{0pt}{11pt} \\
0 & 0 & 0 & 0 & 0 & 1 & 0 & 0 & 0 \rule{0pt}{11pt} \\
0 & 0 & 0 & 0 & 0 & 0 & 0 & 0 & 1 \rule{0pt}{11pt}
\end{array}
\right] \:=\: I_{9}(\,:\,,[\:1\:4\:7\:2\:5\:8\:3\:6\:9\:]).
\end{equation}
A matrix \inm{A}{n^{2}}{n^{2}} is {\em perfect shuffle invariant} if
\begin{equation}
A = \Pi_{\nn} A\Pi_{\nn}
\end{equation}
and
{\em PS-symmetric} 
if it is both symmetric and perfect shuffle invariant. 

In \S2 we show how to construct a reduced rank
approximation to a PS-symmetric matrix that is also PS-symmetric.
This is important in the evaluation 
of the multilinear product (1.2). In particular, it  enables us to approximate the
unfolding matrix $A$ in (1.4) with a relatively short sum of structured Kronecker products:
\begin{equation}
A \:\approx \: \sum_{i=1}^{r} \sigma_{i} \cdot  C_{i} \T C_{i}\qquad \inm{C_{i}^{T}=C_{i}}{n}{n},\;r<<n^{2}.
\end{equation}
It then follows from (1.3) that
\begin{equation}
B \:\approx \: \sum_{i=1}^{r} \sigma_{i} \cdot (X\T X)( C_{i} \T C_{i})(X \T X)^{T} \:=\: \sum_{i=1}^{r} \sigma_{i} (XC_{i}X^{T}) \T (XC_{i}X^{T}).
\end{equation}
The $\{\sigma_{i},C_{i},X\}$ representation of $B$ (and hence ${\cal B}$) is an $O(rn^{3})$ computation. 

The expansion (1.11) looks like a Kronecker-product SVD of $A$ \cite{gvl4, cvl1, cvl2}. However, the  method that we propose in this paper is not based on expensive svd-like computations 
but  on a structured factorization that 
combines  block diagonalization with a pair of ``half-size'' pivoted Cholesky factorizations.
Recall that if \inm{M}{\smallN}{\smallN} is symmetric and positive semidefinite with $r\leq N$ positive eigenvalues, then
the pivoted Cholesky factorization (in exact arithmetic) computes  the factorization
\[
PMP^{T} \:=\: LL^{T} 
\]
where \inm{P}{\smallN}{\smallN} is a permutation matrix and  \inm{L}{\smallN}{r}  is  lower triangular with $r = \mbox{rank}(A)$, 
It follows that if
$Y = P^{T}L = [ y_{1},\ldots,y_{r}]$ is a column partitioning,  then
\[
M \:=\: (P^{T}L)(P^{T}L)^{T} \:=\: YY^{T} \:=\: \sum_{k=1}^{r} y_{k}y_{k}^{T}.
\]
In practice, $r$ is (numerically) determined during the factorization process. More on this in \S4.2.
We mention that this rank-$r$ representation requires $Nr^{2}-2r^{2}/3 + O(Nr)$ flops. See \cite[pp.165-66]{gvl4} for more details\footnote{
The connection between the pivoted LDL factorization $PMP^{T} = \tilde{L}D\tilde{L}^{T}$ where $\tilde{L}$ is unit lower triangular and the pivoted Cholesky factorization $PMP^{T} = LL^{T}$ is simple. The lower triangular Cholesky factor $L$ is given by $L = \tilde{L}\cdot \mbox{diag}(d_{1}^{1/2},\ldots, d_{r}^{1/2})$.
Virtually all of the rank-revealing operations in this paper can be framed in ``LDL'' language. We use the Cholesky representation
so that readers can more easily relate our work to what has already been published and to existing procedures in LAPACK.
}.

\subsection{Overview of the Paper}

In \S2 we discuss the properties of PS-symmetric matrices. A key result is the derivation of
a simple orthogonal matrix $Q$ that can be used to block-diagonalize a PS-symmetric matrix $A$:
$Q^{T}AQ = \mbox{diag}(A_{1},A_{2})$. Rank-revealing  pivoted Cholesky factorizations are then applied to
the half-sized diagonal blocks. The resulting factor matrices are then combined with $Q$ to produce a
rank-1 expansion for $A$ with terms that are also PS-symmetric. In \S3 we apply these results to
compute a structured multilinear product whose defining tensor $\cal A$  is ((1,2),(3,4))-symmetric.
An application from quantum chemistry is considered that has a dramatic low-rank feature.
Implementation details and benchmarks are 
provided in \S4. Anticipated future work and conclusions are offered in \S5.

\subsection{Centrosymmetry: An Instructive Preview}

We conclude the introduction with a brief discussion of matrices that
are centrosymmetric. These are matrices  that are symmetric about  their diagonal \underline{and} antidiagonal, e.g.,
\[
A \:=\:
\left[
\begin{array}{cccc}
a & b & c& d\\
b & e & f & c \\
c & f & e &  b\\
d & c & b & a
\end{array}
\right].
\]
They are a particularly simple class of multi-symmetric matrices and because of that they can be used  to
anticipate  the
main ideas that follow in  \S2 and \S3. For a more in-depth treatment of centrosymmetry, see
Andrew \cite{andrew}, Datta and Morgera \cite{datta} and Pressman \cite{press}.

Formally, a matrix \inm{A}{n}{n} is {\em centrosymmetric} if  $A = A^{T}$ and $A = E_{n}AE_{n}$ where
\[
\inm{E_{n}=I_{n}(:,n:{-1}:1)}{n}{n}
\]
is
the $n$-by-$n$ {\em exchange permutation}.
The redundancies among the elements of a centrosymmetric matrix are nicely exposed through blocking.
Assume for clarity that $n = 2m$. (The odd-$n$ case is basically the same.) If
\[
A \:=\: \left[ \begin{array}{cc} A_{11} & A_{12} \\ A_{21} & A_{22} \rule{0pt}{14pt} \end{array} \right]
\qquad  \inm{A_{ij}}{m}{m}
\]
is centrosymmetric, then by substituting
\[
E_{n} \:=\: \left[ \begin{array}{cc} 0 & E_{m} \\ E_{m} & 0 \rule{0pt}{15pt}\end{array} \right]
\]
into the equation $A = E_{n}AE_{n}$ we see that $A_{21} = E_{m}A_{12}E_{m}$ and
$A_{22} = E_{m}A_{11}E_{m}$, i.e.,
\begin{equation}
 A \:= \: \left[ \begin{array}{cc} A_{11} & A_{12} \\ E_{m}A_{12}E_{m} & E_{m}A_{11}E_{m}\rule{0pt}{15pt}
\end{array} \right].
\end{equation}
Moreover,  $A_{11}$ and $A_{12}E_{m}$ are each symmetric.
Given this block structure it is easy to confirm that the orthogonal matrix
\begin{equation}
Q_{\smallE} \:=\: \frac{1}{\sqrt{2}}
\left[ \begin{array}{c|c} I_{m} & I_{m} \\ E_{m} & -E_{m} \rule{0pt}{15pt} \end{array} \right] \:\equiv \;
\left[ \begin{array}{c|c} Q_{\smallPlus} & Q_{\smallMinus}  \end{array} \right]
\end{equation}
block diagonalizes $A$:
\begin{equation}
Q_{\smallE}^{T} A Q_{\smallE} \:=\: \left[ \begin{array}{cc} A_{11}+A_{12}E_{m} & 0 \\ 0 & A_{11} - A_{12}E_{m} \rule{0pt}{15pt} \end{array} \right]\:\equiv \:
\left[ \begin{array}{cc} A_{\smallPlus} & 0 \\ 0 & A_{\smallMinus}
\rule{0pt}{15pt} \end{array} \right].
\end{equation}
If $A$ is positive semidefinite, then the same can be said of 
$A_{\smallPlus}$ and $A_{\smallMinus}$ and we can compute the
following half-sized pivoted Cholesky factorizations:
\begin{eqnarray}
P_{\smallPlus}A_{\smallPlus}P_{\smallPlus}^{T} &=& L_{\small+}L_{\smallPlus}^{T}  \\
P_{\smallMinus}A_{\smallMinus}P_{\smallMinus}^{T} &=& L_{\smallMinus}L_{\smallMinus}^{T} .\rule{0pt}{18pt}
\end{eqnarray}

\medskip

\noindent
If we define the matrices \inm{Y_{\smallPlus}}{n}{m} and 
\inm{Y_{\smallMinus}}{n}{m}  by
\begin{eqnarray}
Y_{\smallPlus} &=& Q_{\smallPlus}P_{\smallPlus}^{T}L_{\smallPlus} \:=\: [ \:y^{(1)}_{\smallPlus} \:| \: \cdots \:|\:y_{\smallPlus}^{(m)} \:]\\
Y_{\smallMinus} &=& Q_{\smallMinus}P_{\smallMinus}^{T}L_{\smallMinus} \:=\: [ \:y^{(1)}_{\smallMinus} \:| \: \cdots \:|\:y_{\smallMinus}^{(m)} \:], \rule{0pt}{18pt}
\end{eqnarray}

\medskip

\noindent
then it follows from (1.15)-(1.19) that

\medskip

\[
A \:=\:  Y_{\smallPlus}Y_{\smallPlus}^{T} \:+\: Y_{-}Y_{-}^{T} \;=\;
\sum_{i=1}^{m} y_{\smallPlus}^{(i)}\,[y_{\smallPlus}^{(i)}]^{T}
\:+\: \sum_{i=1}^{m} y_{\smallMinus}^{(i)}\,[y_{\smallMinus}^{(i)}]^{T}.
\]

\medskip

\noindent
Each of the rank-1 matrices in this expansion is
centrosymmetric
because $E_{n}Y_{\smallPlus} = Y_{\smallPlus}$
and $E_{n}Y_{\smallMinus} = -Y_{\smallMinus}$.
It follows that if
$r_{\smallPlus} \leq m$ and $r_{\smallMinus}\leq m$, then
\begin{equation}
A_{\{r_{\smallPlus},r_{\smallMinus}\}} \:=\:
\sum_{i=1}^{r_{\smallPlus}} y_{\smallPlus}^{(i)}\,[y_{\smallPlus}^{(i)}]^{T}
\:+\: \sum_{i=1}^{r_{\smallMinus}} y_{\smallMinus}^{(i)}\,[y_{\smallMinus}^{(i)}]^{T}
\end{equation}
is centrosymmetric and rank$(A_{\{r_{\smallPlus},r_{\smallMinus}\}}) =  r_{\smallPlus}+r_{\smallMinus}$. 
Thus, by combining the block diagonalization (1.15) with the pivoted Cholesky factorizations (1.16)-(1.17) we can approximate
a given positive semidefinite centrosymmetric matrix with a matrix of lower rank that is 
also centrosymmetric. 

We briefly consider the efficiency of such a maneuver keeping in mind 
the ``preview nature'' of this subsection.
Here are some obvious implementation concerns:

\medskip

\begin{enumerate}
  \item What is the cost of the block diagonalization? The matrices 
$A_{\smallPlus}$ and $A_{\smallMinus}$ are simple enough, but is their formation a negligible 
overhead?
\item  From the flop point of view, halving the dimension of an $O(n^{3})$ factorization  reduces the volume of
arithmetic
by a factor of 8. Is the cost of computing the pivoted Cholesky's of $A_{\smallPlus}$ and $A_{\smallMinus}$ 
one-fourth the cost of the  single full-size pivoted Cholesky of $A$? 
\item If $A$ is close to a matrix with very low rank and/or its entries  $a_{ij}$ are expensive to compute, then it may be preferable to work with a left-looking implementation of pivoted Cholesky that computes matrix entries on a ``need to know'' basis. How can one organize pivot determination in such a setting?
\end{enumerate}

\medskip

\noindent
 The table in Fig 1.2 sheds  light on some of these issues by comparing the computation of
 the structured approximation $A_{\{r_{\smallPlus},r_{\smallMinus}\}}$ with the unstructured  approximation $A_{r}$ based on  $PAP^{T} = LL^{T}$, i.e.,
\[
A_{r} \:=\: \sum_{i=1}^{r} y^{(i)}[y^{(i)}]^{T}
\]
where $P^{T}L = [y^{(1)},\ldots,y^{(r)}]$ and  $r = r_{\smallPlus}+r_{\smallMinus}$.
\begin{figure}[h]
\begin{center}
\small

\bigskip

\begin{tabular}{|c||c|c||c|c|}
\hline
& \multicolumn{2}{|c||}{$r_{\smallPlus}=n/2\:,\;r_{\smallMinus} = n/2 \rule[-5pt]{0pt}{14pt}$} &
\multicolumn{2}{|c|}{$r_{\smallPlus}=n/100\:,\;r_{\smallMinus} = n/100$}\\
\hline
$n$ & $T_{u}/T_{s}$ & $T_{\mbox{\tiny \em set-up}}/T_{s}$& $T_{u}/T_{s}$ &$T_{\mbox{\tiny \em set-up}}/T_{s}$ \rule[-5pt]{0pt}{14pt}\\
\hline
1500 & 1.93 & 0.32 & 0.53 & 0.66  \rule[-4pt]{0pt}{14pt}\\
3000 & 2.68 & 0.22 & 0.56 & 0.69  \rule[-4pt]{0pt}{14pt}\\
4500 & 2.89 & 0.17 & 0.83 & 0.64  \rule[-4pt]{0pt}{14pt}\\
6000 & 3.18 & 0.13 & 0.88 & 0.65  \rule[-4pt]{0pt}{14pt}\\
\hline
\end{tabular}
\end{center}
\caption{$T_{u}$ is the time required to compute the Cholesky factorization of $A$, $T_{s}$ is the
time required to set up  $A_{\smallPlus}$ and $A_{\smallMinus}$ and compute their Cholesky factorizations, and $T_{\mbox{\tiny \em set-up}}$ is the time required to just set-up $A_{\smallPlus}$ and $A_{\smallMinus}$. The LAPACK procedures POTRF (unpivoted Cholesky calling level-3 BLAS) and PSTRF (pivoted Cholesky calling level-3 BLAS) were used for full rank and low rank cases respectively. Results are based on running numerous  random trials for each combination of $n$ and $(r_{\smallPlus},r_{\smallMinus})$. A  single core of the Intel(R) Core(TM) i5-3210M CPU @ 2.50GHz was used.}
\end{figure}

In the full rank case ($r_{\smallPlus}=n/2\:,\;r_{\smallMinus} = n/2$), a flop-only analysis would predict a speed-up factor of
4 since we are replacing one $n$-by-$n$ Cholesky factorization ($n^{3}/3$ flops) with a pair of 2 half-size factorizations ($2\cdot (n/2)^3/3$ flops.) The ratios $T_{u}/T_{s}$ are somewhat less than this because there is an $O(n^{2})$ overhead associated with the setting
up of the matrices $A_{\smallPlus}$ and $A_{\smallMinus}$. This is quantified by the ratios $T_{\mbox{\tiny \em set-up}}/T_{s}$.

The low-rank results point to  the importance of having a ``lazy evaluation'' strategy when it comes to setting up the
matrices $A_{\smallPlus}$ and $A_{\smallMinus}$. The LAPACK routines that are used are right-looking and thus require the complete 
$O(n^{2})$ set-up of these half-sized matrices. However, the flop cost of the pivoted Cholesky factorizations of these low rank matrices
is $O(nr^{2})$. Thus, the set-up costs dominate and there is a serious tension between efficiency and  structure preservation.  What we need
is  a {\em left-looking} pivoted Cholesky procedure that involves an $O(nr)$ set-up cost. We shall discuss just such a framework in \S4.2
in the context of a highly structured low-rank PS-symmetric approximation problem.

\section{Perfect Shuffle Symmetry}

Just as centrosymmetry is defined by the equation $A = E_{n}AE_{n}$,  $PS$-symmetry is defined by the equation 
$A = \Pi_{\nn} A \Pi_{\nn}$ where $\Pi_{\nn}$ is a particular perfect shuffle permutation. We start by looking at the eigenvectors of this permutation. This leads to the construction of a simple  orthogonal matrix (like $Q_{\smallE}$
in (1.14))
that can be used to block diagonalize a PS-symmetric matrix. A framework for structured low-rank  approximation
follows.

\subsection{Perfect Shuffle Properties}
Perfect shuffle permutations relate matrix transposition to vector permutation.
Following Van Loan \cite[p.78]{cvFFT}, if $m = pr$, then the perfect shuffle
permutation \inm{\Pi_{pr}}{m}{m} is defined by
\[
\Pi_{pr} \:=\: I_{n}(:,[(1:r:m)\: (2:r:m) \: \cdots \: (r:r:m)]).
\]
The action of $\Pi_{pr}$ is best described 
using the {\sc Matlab} {\tt reshape} operator, e.g.,
\[
\inv{x}{12} \; \Rightarrow \; \mbox{\tt reshape}(x,3,4) \;=\; 
\left[
\begin{array}{cccc}
x_{1} & x_{4} & x_{7} & x_{10} \\
x_{2} & x_{5} & x_{8} & x_{11} \\
x_{3} & x_{6} & x_{9} & x_{12}
\end{array}\right].
\]
If \inv{x}{pr}, then
\begin{equation}
y = \Pi_{pr}x \quad \Rightarrow \quad \mbox{\tt reshape}(y,p,r) \:=\: \mbox{\tt reshape}(x,r,p)^{T}.
\end{equation}
In other words, if \inm{S}{r}{p}, then $\mbox{\tt vec}(S^{T}) = \Pi_{pr}\mbox{\tt vec}(S)$.

We shall be interested in the case $p=r=n$. Using (2.1) it is easy to see 
that  $\Pi_{\nn}\Pi_{\nn} = I$  showing that  $\Pi_{\nn} = \Pi_{\nn}^{T}$. Thus, if
$\lambda$ is an eigenvalue of  $\Pi_{\nn}$,
then $\lambda = 1$ or $\lambda = -1$. Using (2.1) again it follows that
\begin{eqnarray*}
\Pi_{\nn}x = +x & \quad \Rightarrow \quad & S = \mbox{\tt reshape}(x,n,n) \:\mbox{is symmetric}\\
\Pi_{\nn}x = -x & \quad \Rightarrow \quad & S = \mbox{\tt reshape}(x,n,n) \:\mbox{is skew-symmetric} \rule{0pt}{14pt}
\end{eqnarray*}
Thus, $\Pi_{\nn}x=x$ if and only if $S=S^T$. Likewise, $\Pi_{\nn}x=-x$ if and only if $S=-S^T$.

Using these observations about $\Pi_{\nn}$, it is easy to verify that the entries in a PS-symmetric matrix
$A = \Pi_{\nn}A\Pi_{\nn}$ 
 satisfy
\begin{equation}
\begin{array}{cl}
 & A(i_{1}+(i_{2}-1)n,j_{1}+(j_{2}-1)n)\\
=\!& A(j_{1}+(j_{2}-1)n,i_{1}+(i_{2}-1)n) \rule{0pt}{13pt}\\
=\!& A(i_{2}+(i_{1}-1)n,j_{2}+(j_{1}-1)n) \rule{0pt}{13pt}\\
=\!& A(j_{2}+(j_{1}-1)n,i_{2}+(i_{1}-1)n) \rule{0pt}{13pt}
\end{array}
\end{equation}
where it is understood that the indices $i_{1}$, $i_{2}$, $j_{1}$, and $j_{2}$ range from 1 to $n$.

\subsection{Block Diagonalization}

Define the subspaces $\mathbf{S}_{\nn}^{(\sym)}$ and $\mathbf{S}_{\nn}^{(\skew)}$ by
\begin{eqnarray}
\mathbf{S}_{\nn}^{(\sym)} &=& \{\inv{x}{n^{2}} \:|\: \Pi_{\nn}x = x\:\}\\
\mathbf{S}_{\nn}^{(\skew)} &=& \{\inv{x}{n^{2}} \:|\: \Pi_{\nn}x = -x\:\}. \rule{0pt}{15pt}
\end{eqnarray}
It is easy to verify that
$[\mathbf{S}_{\nn}^{(\sym)}]^{\perp} \:=\: \mathbf{S}_{\nn}^{(\skew)}$. Moreover, if
\inm{A}{n^{2}}{n^{2}} is PS-symmetric 
and $x \in \mathbf{S}_{\nn}^{(\sym)}$,
then
\[
Ax \:=\:(\Pi_{\nn}A\Pi_{\nn})x \:=\:
(\Pi_{\nn}A)(\Pi_{\nn}x) \:=\:
(\Pi_{\nn}A)x \:=\: \Pi_{\nn}(Ax)
\]
which shows that  $\mathbf{S}_{\nn}^{(\sym)}$ is an invariant subspace for $A$. 
The subspace $\mathbf{S}_{\nn}^{(\skew)}$ is also
invariant for $A$ by similar reasoning.

Using these facts we can construct a sparse orthogonal matrix $Q_{\nn}$ 
that can be used to block diagonalize a PS-symmetric matrix. Let
 $I_{n} = [e_{1},\ldots,e_{n}]$ be a column partitioning and define
the matrices
\begin{equation}
\begin{array}{lclclcl}
Q_{\nn}^{(\sym)} &=& \left[ 
\begin{array}{c|c|c}
  q_{1}^{(\sym)} & \cdots &q_{n_{\sym}}^{(\sym)}\end{array} \right] & & n_{\mbox{\rm \tiny sym}} &=& n(n+1)/2 \\
Q_{\nn}^{(\skew)} &=& \left[  
\begin{array}{c|c|c} q_{1}^{(\skew)}\! & \cdots & q_{n_{\skew}}^{(\skew)}\!\end{array} \right] & &  n_{\mbox{\rm \tiny skew}} &=& n(n-1)/2 \rule{0pt}{15pt}
\end{array}
\end{equation}
as follows

\begin{code}
\\
\> $k=0$\\
\> {\bf for} $j=1:n\rule{0pt}{12pt}$\\
\>\> {\bf for} $i=j:n\rule{0pt}{12pt}$\\
\>\>\>   $k = k+1 \rule{0pt}{12pt}$\\
\>\>\>    $q_{k}^{(\sym)}    \:=\:\left\{ \begin{array}{ll}
(e_{i}\T e_{j} + e_{j} \T e_{i})/\sqrt{2} & \mbox{if $i>j$}\\
\,e_{i}\T e_{i} \rule{0pt}{15pt} & \mbox{if $i=j$} \end{array}\right.$\\
\>\> {\bf end} $\rule{0pt}{0pt}$\\
\>{\bf end} \`(2.6)\\
\> $k=0\rule{0pt}{12pt}$\\
\> {\bf for} $j=1:n-1 \rule{0pt}{12pt}$\\
\>\> {\bf for} $i=j+1:n\rule{0pt}{12pt}$\\
\>\>\>   $k = k+1 \rule{0pt}{12pt}$\\
\>\>\>   $q_{k}^{(\skew)} = \:(e_{j}\T e_{i} - e_{i} \T e_{j})/\sqrt{2} \rule{0pt}{13pt}$\\
\>\> {\bf end} $\rule{0pt}{0pt}$\\
\>{\bf end} \\
\end{code}
\stepcounter{equation}
Since {\tt reshape}$(e_{i}\T e_{j},n,n) = e_{j}e_{i}^{T}$, it is clear that the columns of $Q^{(\sym)}$ reshape
to symmetric matrices while the columns of $Q^{(\skew)}$ reshape
to skew-symmetric matrices. Define the $n^{2}$-by-$n^{2}$ matrix 
\begin{equation}
\label{Qdef}
Q_{\nn} \:=\: \left[ Q_{\nn}^{(\sym)} \:|\: Q_{\nn}^{(\skew)} \:\right]
\end{equation}
e.g.,
\[
\!\!\!
Q_{\mbox{\tiny 33}} \:=\:
\left[
\begin{array}{cccccc|rrr}
1 & 0 & 0 & 0 & 0 & 0 & 0 & 0  & 0 \rule{0pt}{11pt}\\
0 & \alpha & 0 & 0 & 0 & 0 & \alpha & 0  & 0 \rule{0pt}{11pt}\\
0 & 0 & \alpha & 0 & 0 & 0 & 0 & \alpha  & 0 \rule{0pt}{11pt}\\
0 & \alpha & 0 & 0 & 0 & 0 & -\alpha & 0  & 0 \rule{0pt}{11pt}\\
0 & 0 & 0 & 1 & 0 & 0 & 0 & 0  & 0 \rule{0pt}{11pt}\\
0 & 0 & 0 & 0 & \alpha & 0 & 0 & 0  & \alpha \rule{0pt}{11pt}\\
0 & 0 & \alpha & 0 & 0 & 0 & 0 & -\alpha  & 0 \rule{0pt}{11pt}\\
0 & 0 & 0 & 0 & \alpha & 0 & 0 & 0  & -\alpha \rule{0pt}{11pt}\\
0 & 0 & 0 & 0 & 0 & 1 & 0 & 0  & 0 \rule{0pt}{11pt}\\
\end{array}
\right],  \qquad \alpha = \frac{1}{\sqrt{2}}.
\]
It is clear that this matrix is orthogonal. Here is a formal proof  together with a verification that
$Q_{\nn}$  block diagonalizes a matrix with PS-symmetry.

\begin{theorem}
If \inm{A}{n^{2}}{n^{2}} is PS-symmetric and $Q_{\nn}$ is defined by (2.6), then $Q_{\nn}$ is orthogonal and
\begin{equation}
Q_{\nn}^{T}AQ_{\nn} 
\:=\:
\left[ \begin{array}{cc} A^{(\mbox{\rm \tiny sym})} & 0 \\ 0 & A^{(\mbox{\rm \tiny skew})} \rule{0pt}{13pt}
\end{array}\right]
\end{equation}
where \inm{A^{(\mbox{\rm \tiny sym})}}{n_{\mbox{\rm \tiny sym}}}{n_{\mbox{\rm \tiny sym}}} and
\inm{A^{(\mbox{\rm \tiny skew})}}{n_{\mbox{\rm \tiny skew}}}{n_{\mbox{\rm \tiny skew}}}.
\end{theorem}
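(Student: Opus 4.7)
The plan is to reduce the theorem to two facts already set up in the excerpt: (a) the columns of $Q_{\nn}^{(\sym)}$ form an orthonormal basis for $\mathbf{S}_{\nn}^{(\sym)}$ and likewise the columns of $Q_{\nn}^{(\skew)}$ form an orthonormal basis for $\mathbf{S}_{\nn}^{(\skew)}$; and (b) these two subspaces are complementary invariant subspaces of $A$.

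First I would verify orthogonality of $Q_{\nn}$. Using the identity $(e_{i}\T e_{j})^{T}(e_{k}\T e_{\ell}) = \delta_{ik}\delta_{j\ell}$, a direct calculation on the formulas in (2.6) shows that any two distinct symmetric columns are orthogonal (the pairs of standard basis Kronecker products $e_{i}\T e_{j} + e_{j}\T e_{i}$ use disjoint index pairs $\{i,j\}$), that each has unit norm (the $\sqrt{2}$ is precisely what the pairing demands), and similarly for the skew columns. The cross inner product between any $q_{k}^{(\sym)}$ and any $q_{\ell}^{(\skew)}$ vanishes by the $\pm$ cancellation in $(e_{i}\T e_{j}+e_{j}\T e_{i})^{T}(e_{j}\T e_{i}-e_{i}\T e_{j})$. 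Counting, we have $n_{\sym}+n_{\skew}=n(n+1)/2+n(n-1)/2 = n^{2}$ orthonormal vectors, so $Q_{\nn}$ is an $n^{2}\times n^{2}$ orthogonal matrix.

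Next I would identify the column spans. Since $\mathtt{reshape}(e_{i}\T e_{j},n,n) = e_{j}e_{i}^{T}$, the matrices reshaped from the $q_{k}^{(\sym)}$ are exactly the standard orthonormal basis of the space of $n\times n$ symmetric matrices (with diagonal entries and symmetric off-diagonal pairs appropriately scaled). By the characterization established just before the theorem, these reshape to symmetric matrices precisely when they lie in $\mathbf{S}_{\nn}^{(\sym)}$, so $\mathrm{range}(Q_{\nn}^{(\sym)}) = \mathbf{S}_{\nn}^{(\sym)}$. The analogous argument places $\mathrm{range}(Q_{\nn}^{(\skew)}) = \mathbf{S}_{\nn}^{(\skew)}$.

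Finally, I would invoke the invariance result already proven: for PS-symmetric $A$, both $\mathbf{S}_{\nn}^{(\sym)}$ and $\mathbf{S}_{\nn}^{(\skew)}$ are $A$-invariant. Thus $AQ_{\nn}^{(\sym)}$ has every column in $\mathbf{S}_{\nn}^{(\sym)}$, which is orthogonal to $\mathbf{S}_{\nn}^{(\skew)} = \mathrm{range}(Q_{\nn}^{(\skew)})$, so $[Q_{\nn}^{(\skew)}]^{T}AQ_{\nn}^{(\sym)} = 0$, and symmetrically $[Q_{\nn}^{(\sym)}]^{T}AQ_{\nn}^{(\skew)} = 0$. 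Defining $A^{(\sym)} = [Q_{\nn}^{(\sym)}]^{T}AQ_{\nn}^{(\sym)}$ and $A^{(\skew)} = [Q_{\nn}^{(\skew)}]^{T}AQ_{\nn}^{(\skew)}$ then yields the block-diagonal form in (2.8) with the stated dimensions. The only place that requires care is the bookkeeping in the orthogonality verification (especially the cross sym/skew inner products); everything else is a clean consequence of invariance plus a dimension count.
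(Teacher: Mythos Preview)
Your proposal is correct and follows essentially the same route as the paper. The orthogonality verification is identical in spirit (Kronecker-delta bookkeeping on $(e_i\T e_j)^T(e_k\T e_\ell)$), and for the block-diagonalization step the paper performs the direct computation $[Q_{\nn}^{(\sym)}]^T A Q_{\nn}^{(\skew)} = [Q_{\nn}^{(\sym)}]^T \Pi_{\nn} A \Pi_{\nn} Q_{\nn}^{(\skew)} = -[Q_{\nn}^{(\sym)}]^T A Q_{\nn}^{(\skew)}$, which is just the unwrapped version of your invariant-subspace argument.
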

\begin{proof}
All the columns in $Q_{\nn}$ have unit length so the problem is to establish that any pair of its columns are orthogonal to each other.
It is obvious that $\{ e_{1}\T e_{1},\ldots,e_{n}\T e_{n}\}$ is an orthonormal set of vectors and that
\[
(e_{i} \T e_{i})^{T}(e_{p} \T e_{q}) = (e_{i}^{T}e_{p})(e_{i}^{T}e_{q}) = 0 
\]
provided $p\neq q$. It follows that
any column of the form $e_{i} \T e_{i}$ is orthogonal to all the other columns in $Q_{\nn}$. 
Using the Kronecker delta $\delta_{ij}$, 
if $i\neq j$ and $p\neq q$, then
\[
(e_{i} \T e_{j} + e_{j} \T e_{i})^{T})(e_{p} \T e_{q} - e_{q} \T e_{p}) \:=\: \delta_{ip}\delta_{jq} + \delta_{iq}\delta_{jp} - \delta_{iq}\delta_{jp} - \delta_{jq}\delta_{ip} \:=\: 0.
\]
This confirms that  
\begin{equation}
\left[Q_{\nn}^{(\skew)}\right]^{T}Q_{\nn}^{(\sym)} = 0.
\end{equation}
If $(i,j)$, $(j,i)$, $(p,r)$ and $(r,p)$ are distinct index pairs, then it is easy to show that

\begin{eqnarray*}
(e_{i} \T e_{j} + e_{j} \T e_{i})^{T})(e_{p} \T e_{q} + e_{q} \T e_{p}) &=& \delta_{ip}\delta_{jq} + \delta_{iq}\delta_{jp} + \delta_{iq}\delta_{jp} + \delta_{jq}\delta_{ip} \:=\: 0\\
(e_{i} \T e_{j} - e_{j} \T e_{i})^{T})(e_{p} \T e_{q} - e_{q} \T e_{p}) &=& \delta_{ip}\delta_{jq} - \delta_{iq}\delta_{jp} - \delta_{iq}\delta_{jp} + \delta_{jq}\delta_{ip} \:=\: 0\rule{0pt}{15pt}.
\end{eqnarray*}

\medskip

\noindent
These equations establish  that the columns of both $Q_{\nn}^{(\sym)}$ and $Q_{\nn}^{(\skew)}$ are orthonormal. Combined with (2.9) we see that
$Q_{\nn}$ is an orthogonal matrix. 

To confirm that this matrix block diagonalizes a PS-symmetric $A$ we   observe using (2.7) that
\[
Q_{\nn}^{T} A Q_{\nn}\:=\:
\left[ 
\begin{array}{cc}
 \left[\:Q_{\nn}^{(\sym)} \right]^{T}  A\: Q_{\nn}^{(\sym)} &  \left[\:Q_{\nn}^{(\sym)} \right]^{T}  A \:Q_{\nn}^{(\skew)} \\
 \,\left[\:Q_{\nn}^{(\skew)}\right]^{T}A \:Q_{\nn}^{(\sym)}& \left[\:Q_{\nn}^{(\skew)}\right]^{T}  A\: Q_{\nn}^{(\skew)}  \rule{0pt}{20pt}
\end{array}
\right].
\]

\bigskip

\noindent
Since  $\Pi_{\nn} Q_{\nn}^{(\sym)} = Q_{\nn}^{(\sym)}$
and
$\Pi_{\nn} Q_{\nn}^{(\skew)} = -Q_{\nn}^{(\skew)}$
it follows that
\[
[Q_{\nn}^{(\sym)} ]^{T}  A Q_{\nn}^{(\skew)} \:=\:
[Q_{\nn}^{(\sym)} ]^{T}  \Pi_{\nn} A \Pi_{\nn} Q_{\nn}^{(\skew)}
\:=\: -[Q_{\nn}^{(\sym)} ]^{T}  A Q_{\nn}^{(\skew)} \:=\: 0.  \rule{0pt}{18pt}
\]

\medskip

\noindent
Setting
\begin{eqnarray}
A^{(\sym)} &=& Q_{\nn}^{(\sym)}\rule{0pt}{5pt}^{T}\,A\,Q_{\nn}^{(\sym)}\\
A^{(\skew)} &=& Q_{\nn}^{(\skew)}\rule{0pt}{5pt}^{T}\,A\,Q_{\nn}^{(\skew)}\rule{0pt}{14pt}
\end{eqnarray}
completes the proof of the theorem.
\end{proof}

\bigskip

The efficient formation of of $A^{(\sym)}$ and $A^{(\skew)}$ is critical to our method and to that end we develop characterization of these blocks that is much more useful than (2.10) and (2.11).
Define the index vectors  \symn$\in \R^{n_{\sym}}$ and \skewn$\in \R^{n_{\skew}}$
as follows:
\begin{code}
\\
\> $k=0$\\
\> {\bf for} $j=1:n$\\
\>\> {\bf for} $i=j:n$\\
\>\>\> $k = k+1$\\
\>\>\> \symn$(k) = i + (j-1)n$\\
\>\> {\bf end}\\
\> {\bf end}\\
\> $k=0$ \`(2.12)\\
\> {\bf for} $j=1:n$\\
\>\> {\bf for} $i=j+1:n$\\
\>\>\> $k = k+1$\\
\>\>\> \skewn$(k) = i + (j-1)n$\\
\>\> {\bf end}\\
\> {\bf end}\\
\end{code}
\stepcounter{equation}
If \inm{M}{n}{n} and $v = \mbox{\tt vec}(M)$, then $v(\mbox{\tt sym}_{n})$ is the vector of $M$'s lower triangular entries
and $v(\mbox{\tt skew}_{n})$ is the vector of $M$'s strictly lower triangular entries. (Consider the example
$\mbox{\tt sym}_{3} = [1\; 2 \; 3 \;5 \; 6 \;9]$  and $\mbox{\tt skew}_{3} = [2 \; 3 \; 6]$. ) 
Since
\[
\Pi_{\nn} \,( e_{i} \T e_{j}) \:=\: (e_{j} \T e_{i}),
\]
it  follows from (2.6) that if 
\begin{equation}
T^{(\sym)} \:=\: \frac{I_{n^{2}} + \Pi_{\nn}}{2},
\end{equation}
 then $q_{k}^{(\sym)}$ 
is a multiple of $T^{(\sym)}(:,\symn(k))$ while if
\begin{equation}
T^{(\skew)} \:=\: \frac{I_{n^{2}} - \Pi_{\nn}}{2},
\end{equation}
then $q_{k}^{(\skew)}$ 
is a multiple of $T^{(\skew)}(:,\skewn(k))$.
Indeed, if
the $n^{2}$-by-$n^{2}$ diagonal matrix $\Delta^{(\sym)}$ is defined by
\begin{equation}
\Delta^{(\sym)}_{i+(j-1)n,i+(j-1)n} \:=\: \left\{ \begin{array}{ll} \sqrt{2} & i\neq j\\
1 & i=j \rule{0pt}{14pt}\end{array} \right.
\end{equation}
where $i$ and $j$ each range from 1 to $n$,
then it is easy to verify that the columns of $T^{(\sym)}\Delta^{(\sym)}$ have unit 2-norm and
\begin{equation}
Q_{\nn}^{(\sym)} \:=\: T^{(\sym)}(:,u) \cdot \Delta^{(\sym)}(u,u), \qquad u = \symn. \rule[-10pt]{0pt}{25pt}
\end{equation}
The scaling to obtain  $Q_{\nn}^{(\skew)}$ is simpler:
\begin{equation}
Q_{\nn}^{(\skew)} \:=\: \sqrt{2} \cdot T^{(\skew)}(:,v),\qquad v = \skewn. \rule[-10pt]{0pt}{25pt}
\end{equation}
Note that $T^{(\sym)}$ is symmetric and $T^{(\sym)}T^{(\sym)}  = T^{(\sym)}$. Since $x \in \mathbf{S}_{\nn}^{(\sym)}$ implies
$T^{(\sym)}x = x$, it follows that $T^{(\sym)}$ is the orthogonal projector associated with $\mathbf{S}_{\nn}^{(\sym)}$. Likewise, $T^{(\skew)}$ is the orthogonal projector associated with $\mathbf{S}_{\nn}^{(\skew)}$.

Since $\Pi_{\nn}A\Pi_{\nn} = A$, it is easy to show that
\begin{eqnarray*}
T^{(\sym)}\rule{0pt}{5pt}^{T}\,A\,T^{(\sym)} &=& (I_{n^{2}} + \Pi_{\nn})A(I_{n^{2}} + \Pi_{\nn}) \:=\: (A + A\Pi_{\nn})/2
\rule{0pt}{18pt}\\
T^{(\skew)}\rule{0pt}{5pt}^{T}\,A\,T^{(\skew)} &=& (I_{n^{2}} - \Pi_{\nn})A(I_{n^{2}} - \Pi_{\nn}) \:=\: (A - A\Pi_{\nn})/2
\rule{0pt}{18pt}
\end{eqnarray*}
When these equations are combined with (2.10), (2.11), (2.16), and (2.17)
 we obtain
\begin{eqnarray*}
A^{(\sym)} &\;=\;&  \Delta^{(\sym)}(u,u)\cdot \,\frac{A(u,u)+A(u,:)\Pi_{\nn}(:,u)}{2} \,\cdot \Delta^{(\sym)}(u,u) \rule{0pt}{25pt}\\
A^{(\skew)}&\;=\;& (A(v,v)-A(v,:)\Pi_{\nn}(:,v)) \rule{0pt}{25pt}
\end{eqnarray*}
This can be rewritten as
\begin{eqnarray}
A^{(\sym)} &\;=\;&  \Delta^{(\sym)}(u,u)\cdot \,\frac{A(u,u)+A(u,p(u))}{2} \,\cdot \Delta^{(\sym)}(u,u) \rule{0pt}{25pt}\\
A^{(\skew)}&\;=\;& A(v,v)-A(v,p(v)) \rule{0pt}{25pt}
\end{eqnarray}
where $p = \left[\; 1:n:n^{2} \; 2:n:n^{2} \cdots n:n:n^{2} \:\right]$
is the index vector that defines $\Pi_{\nn}$, i.e., $\Pi_{\nn} = I_{n^{2}}(:,p)$. See (1.18).

\subsection{The Schur Decomposition and SVD of a PS-Symmetric Matrix} 
It is not a surprise
that the Schur decomposition of a PS-symmetric matrix involves a highly structured eigenvector
matrix. If 
\[
[U^{(\sym)}]^{T}A^{(\sym)}U^{(\sym)} \:=\: \Lambda^{(\sym)} \:=\: 
\mbox{diag}(\lambda^{(\sym)}_{1},\ldots,\lambda^{(\sym)}_{n_{\sym}})
\]
and
\[
[U^{(\skew)}]^{T}A^{(\skew)}U^{(\skew)} \:=\: \Lambda^{(\skew)} \:=\: 
\mbox{diag}(\lambda^{(\skew)}_{1},\ldots,\lambda^{(\skew)}_{n_{\skew}})
\]
are the Schur decompositions of the diagonal blocks in (2.8) and the orthogonal matrix $Q$ is defined by
\begin{equation}
Q \:=\:  Q_{\nn} \left[ \begin{array}{cc} U^{(\sym)} & 0 \\ 0 & U^{(\skew)} \rule{0pt}{15pt} \end{array} \right]
\:=\: \left[ \begin{array}{c|c}Q_{\nn}^{(\sym)}U^{(\sym)} & Q_{\nn}^{(\skew)}U^{(\skew)}\end{array}\right]
\end{equation}
then
\begin{equation}
Q^{T}AQ \:=\: \left[ \begin{array}{cc} D^{(\sym)} & 0 \\ 0 & D^{(\skew)} \rule{0pt}{15pt} \end{array} \right]
\:=\:
\mbox{diag}(\lambda^{(\sym)}_{1},\ldots,\lambda_{n_{\sym}}^{(\sym)},\lambda^{(\skew)}_{1},\ldots,\lambda_{n_{\skew}}^{(\skew)}) .
\end{equation}
By virtue of how we defined $Q_{\nn}$ in (2.6),  the columns of $Q_{\nn}^{(\sym)}U^{(\sym)}$ (the
``sym-eigenvectors'') reshape to $n$-by-$n$ symmetric matrices. Likewise, the
the columns of $Q_{\nn}^{(\skew)}U^{(\skew)}$ (the
``skew-eigenvectors'') reshape to $n$-by-$n$ skew-symmetric matrices.

Note that this structured Schur decomposition is an unnormalized SVD of $A$. The singular values of $A$ are
the absolute values of the $\lambda$'s. Reordering together with some  ``minus one'' scalings can turn equations
(2.20) and (2.21)
 into a normalized SVD.

\subsection{The Kronecker Product SVD of a PS-Symmetric matrix}

A block matrix with uniformly sized blocks has a {\em Kronecker Product SVD} (KPSVD), see \cite[p.712--14]{gvl4}.
For example, if $A$ an $n$-by-$n$ is a block matrix with $n$-by-$n$ blocks, then there exist
$n$-by-$n$ matrices $B_{1},\ldots,B_{n^{2}}$, and $C_{1},\ldots,C_{n^{2}}$ and scalars 
$\sigma_{1}\geq \cdots \geq \sigma_{n^{2}}\geq 0$ such that
\[
A \:=\: \sum_{k=1}^{n^{2}} \sigma^{(k)} \, (B_{k} \T C_{k}).
\]
The decomposition is  related to the SVD of the $n^{2}$-by-$n^{2}$ matrix $\tilde{A}$ defined by
\begin{equation}
\tilde{A}(i_{2}+(j_{2}-1)n,i_{1}+(j_{1}-1)n)\:=\: A(i_{1}+(i_{2}-1)n,j_{1}+(j_{2}-1)n)
\end{equation}
where the indices $i_{1}$, $i_{2}$, $j_{1}$, and $j_{2}$ range from 1 to $n$.
In particular, if
\[
\tilde{A} \:=\: \sum_{k=1}^{n^{2}} \sigma_{k}\,  b_{k} c_{k}^{T}
\]
is the rank-1 SVD expansion of $\tilde{A}$, then 
\begin{eqnarray}
B_{k} &=& \mbox{\tt reshape}(b_{k},n,n)\\
C_{k} &=& \mbox{\tt reshape}(c_{k},n,n) \rule{0pt}{14pt}
\end{eqnarray}
for $k=1:n^{2}$. 

We show  that  KPSVD of a  PS-symmetric  matrix is highly structured.
To begin with,  the matrix  $\tilde{A}$ defined by (2.22) is PS-symmetric. Indeed by combining (2.2) and (2.22)
we see that
\begin{equation}
\begin{array}{cl}
  & \tilde{A}(i_{2}+(j_{2}-1)n,i_{1}+(j_{1}-1)n) \\ 
= & \tilde{A}(j_{2}+(i_{2}-1)n,j_{1}+(i_{1}-1)n)  \rule{0pt}{13pt} \\
= & \tilde{A}(i_{1}+(j_{1}-1)n,i_{2}+(j_{2}-1)n) \rule{0pt}{13pt}\\
= & \tilde{A}(j_{1}+(i_{1}-1)n,j_{2}+(i_{2}-1)n) .\rule{0pt}{13pt}
\end{array}
\end{equation}
These equalities show that $\tilde{A}^{T} = \tilde{A}$ and $\tilde{A} = \Pi_{\nn}\tilde{A}\Pi_{\nn}$.
In other words, $\tilde{A}$ is PS-symmetric. From (2.18) and (2.19)  we know that $\tilde{A}$ has a 
rank-1 Schur decomposition expansion of the form
\[
\tilde{A} \:=\: \sum_{i=1}^{n_{\sym}} \lambda^{(\sym)}_{i} \, b^{(\sym)}_{i}\,[b^{(\sym)}_{i}]^{T}
\:+\:
\sum_{i=1}^{n_{\skew}} \lambda^{(\skew)}_{i} \, b^{(\skew)}_{i}\,[b^{(\skew)}_{i}]^{T}
\]
where $\Pi_{\nn}b^{(\sym)}_{i} = b^{(\sym)}_{i}$ and $\Pi_{\nn}b^{(\skew)}_{i} = -b^{(\skew)}_{i}$. We may assume
\[
|\lambda^{(\sym)}_{1}|\:\geq \cdots \geq \: |\lambda^{(\sym)}_{n_{\sym}}|
\]
and
\[
|\lambda^{(\skew)}_{1}|\:\geq \cdots \geq \: |\lambda^{(\skew)}_{n_{\skew}}|.
\]
To get an unnormalized KPSVD of $A$, we follow (2.23) and (2.24) and reshape the eigenvectors  of $\tilde{A}$ into $n$-by-$n$ matrices.
The sym-eigenvectors give us symmetric matrices
$B^{(\sym)}_{1},\ldots,B^{(\sym)}_{n_{\sym}}$
while the skew-eigenvectors give us  skew-symmetric matrices
$B^{(\skew)}_{1},\ldots,B^{(\skew)}_{n_{\skew}}$. Overall
 we obtain
\[
A \:=\: \sum_{i=1}^{n_{\sym}} \lambda^{(\sym)}_{i} \, (B^{(\sym)}_{i} \T B^{(\sym)}_{i})
\:+\:
\sum_{i=1}^{n_{\skew}} \lambda^{(\skew)}_{i} \, (B^{(\skew)}_{i}\T B^{(\skew)}_{i})
\]
which can be regarded as an unnormalized KPSVD of $A$.

\subsection{A Structured Cholesky-Based Representation}

Now assume that $A$  is PS-symmetric \underline{and} positive semidefinite with rank $r$. 
Analogous to how we proceeded in the centrosymmetric case, we  develop
a structured representation of $A$ that is based on pivoted Cholesky factorizations of the matrices
$A^{(\sym)}$ and $A^{(\skew)}$ in (2.18) and (2.19). 
We  compute the pivoted Cholesky factorizations

\begin{eqnarray}
P^{(\sym)}A^{(\sym)}P^{(\sym)}\rule{0pt}{4pt}^{T} &=& L^{(\sym)}{L^{(\sym)}}\rule{0pt}{4pt}^{T}  \\
P^{(\skew)}A^{(\skew)}{P^{(\skew)}}\rule{0pt}{4pt}^{T} &= & L^{(\skew)}{L^{(\skew)}}\rule{0pt}{4pt}^{T} 
\rule{0pt}{16pt}
\end{eqnarray}
where
\begin{eqnarray}
\inm{L^{(\sym)}}{n_{\sym}}{r_{\sym}}, &\qquad  & r_{\sym} = \mbox{rank}(A^{(\sym)})\\
\inm{L^{(\skew)}}{n_{\skew}}{r_{\skew}}, & & r_{\skew} = \mbox{rank}(A^{(\skew)}) \rule{0pt}{15pt}.
\end{eqnarray}

\medskip

\noindent
The matrices $\{L^{(\sym)},P^{(\sym)},L^{(\skew)},P^{(\skew)}\}$ collectively
define a structured representation of $A$, for if
\[
\begin{array}{rcccl}
Y^{(\sym)} \!&\!=\!& Q^{(\sym)}P^{(\sym)}\rule{0pt}{4pt}^{T}L^{(\sym)} &\!=\!& [ \:y_{1}^{(\sym)} \:| \: \cdots \:|\:y^{(\sym)}_{r_{\sym})} \:]\\
Y^{(\skew)} \!&\!=\!& Q^{(\skew)}P^{(\skew)}\rule{0pt}{4pt}^{T}L^{(\skew)} &\!=\!& [ \:y_{1}^{(\skew)} \:| \: \cdots \:|\:y^{(\skew)}_{r_{\skew}} \:], \rule{0pt}{18pt}
\end{array}
\]
then it follows from $A = Q^{(\sym)}A^{(\sym)}{Q^{(\sym)}}^{T}\:+\:Q^{(\skew)}A^{(\skew)}{Q^{(\skew)}}^{T} $ that

\begin{equation}
A \:=\: \sum_{i=1}^{r_{\sym}} y^{(\sym)}_{i}\,[y^{(\sym)}_{i}]\rule{0pt}{4pt}^{T}
\:+\: \sum_{i=1}^{r_{\skew}} y^{(\skew)}_{i}\,[y^{(\skew)}_{i}]^{T}. \rule{0pt}{18pt}
\end{equation}

\medskip

\noindent
Each of the rank-1 matrices in this expansion is
PS-symmetric 
because 

\[
\begin{array}{rclcccl}
\Pi_{\nn}Y^{(\sym)} \!\!\!&\!=\!&\! (\Pi_{\nn}Q^{(\sym)})(P^{(\sym)}\rule{0pt}{4pt}^{T}L^{(\sym)}) \!\!\!&\!=\!&\! 
Q^{(\sym)}(P^{(\sym)}\rule{0pt}{4pt}^{T}L^{(\sym)})\!\!\!&\!=\!&\! Y^{(\sym)}\\
\Pi_{\nn}Y^{(\skew)} \!\!\!&\!=\!&\! (\Pi_{\nn}Q^{(\skew)})(P^{(\skew)}\rule{0pt}{4pt}^{T}L^{(\skew)}) \!\!\!&\!=\!&\! 
-Q^{(\sym)}(P^{(\skew)}\rule{0pt}{4pt}^{T}L^{(\skew)})\!\!\!&\!=\!&\! -Y^{(\skew)} .\rule{0pt}{18pt}
\end{array}
\]

\medskip
\noindent
Thus, by combining the block diagonalization  with pivoted Cholesky factorizations  we can efficiently represent 
a given positive semidefinite matrix with PS-symmetry. Here is a summary of the procedure:
\begin{figure}[h]
\begin{center}
\begin{tabular}{|ll|}
\hline
\multicolumn{2}{|c|}{Representing a Positive Semidefinite PS-Symmetric $A\rule[-5pt]{0pt}{16pt}$}\\
\hline
1. & Form $A^{(\sym)}$ using (2.18).\rule{0pt}{14pt}\\
2. & Compute the pivoted Cholesky factorization of $A^{(\sym)}$. See (2.26) and (2.28). \rule{0pt}{14pt}\\
3. & Form $A^{(\skew)}$ using (2.19) \rule{0pt}{14pt}\\
4. & Compute the pivoted Cholesky factorization of $A^{(\skew)}$. See (2.27) and (2.29). \rule[-6pt]{0pt}{18pt}\\
\hline
\end{tabular}
\end{center}
\caption{Computing the representation $\{L^{(\sym)},P^{(\sym)},L^{(\skew)},P^{(\skew)}\}$of a PS-Symmetric Matrix}
\end{figure}

\bigskip

\noindent
By truncating  the summations in (2.30) we can use this framework to construct low-rank approximations that are also PS-symmetric.
We shall have more to say about this and related implementation issues in \S4. To anticipate the discussion we share 
some benchmarks in {\sc Fig. 2.2} 
\begin{figure}[h]
\begin{center}
\small

\bigskip

\begin{tabular}{|c||c|c||c|c|}
\hline
& \multicolumn{2}{|c||}{$r_{\mbox{\tiny sym}}=n_{\mbox{\tiny sym}} \:,\;r_{\mbox{\tiny skew}}=n_{\mbox{\tiny skew}} \rule[-5pt]{0pt}{14pt}$} &
\multicolumn{2}{|c|}{$r_{\mbox{\tiny sym}}=n\:,\;r_{\mbox{\tiny skew}} = n$}\\
\hline
$n$ & $T_{u}/T_{s}$ & $T_{\mbox{\tiny \em set-up}}/T_{s}$& $T_{u}/T_{s}$ &$T_{\mbox{\tiny \em set-up}}/T_{s}$ \rule[-5pt]{0pt}{14pt}\\
\hline
39 & 1.69 & 0.44 & 0.65 & 0.75  \rule[-4pt]{0pt}{14pt}\\
55 & 2.33 & 0.32 & 0.69 & 0.78  \rule[-4pt]{0pt}{14pt}\\
67 & 2.48 & 0.28 & 0.67 & 0.69  \rule[-4pt]{0pt}{14pt}\\
77 & 2.81 & 0.22 & 0.75 & 0.69  \rule[-4pt]{0pt}{14pt}\\
\hline
\end{tabular}
\end{center}
\caption{$T_{u}$ is the time required to compute the Cholesky factorization of $A$, $T_{s}$ is the
time required to set up $A^{(\mbox{\tiny sym})}$ and $A^{(\mbox{\tiny skew})}$ and compute their Cholesky factorizations, and $T_{\mbox{\tiny \em set-up}}$ is the time required to just set-up $A^{(\mbox{\tiny sym})}$ and $A^{(\mbox{\tiny skew})}$. The LAPACK procedures POTRF (unpivoted Cholesky calling level-3 BLAS) and PSTRF (pivoted Cholesky calling level-3 BLAS) were used for full rank and low rank cases respectively. Results are based on running numerous  random trials for each combination of $n$ and $(r_{\mbox{\tiny sym}},r_{\mbox{\tiny skew}})$. A  single core of the Intel(R) Core(TM) i5-3210M CPU @ 2.50GHz was used.}
\end{figure}
The results are similar to what is reported in {\sc Fig 1.2} for the centrosymmetric problem. In the full rank case we anticipate a
four-fold speed-up because the matrices $A^{(\sym)}$ and $A^{(\skew)}$ have dimension that is about half the dimension  of $A$.
However, $T_{u}/T_{s}$ is somewhat  less than 4 because the set-up time fraction
$T_{\mbox{\tiny \em set-up}}/T_{s}$ is nontrivial. In the low-rank case, this overhead rivals the cost of the half-size factorizations
because of the reliance upon traditional right-looking procedures that force us to carry out the complete block diagonalization beforehand.

\section{((1,2),(3,4))-Symmetry}

We now apply the results of the previous section to the structured multilinear product (1.2).
To drive the discussion we consider an example that arises in quantum chemistry and related 
application areas. The underlying tensor is ((1,2),(3,4)))-symmetric and its $[1,2]\times[3,4]$
unfolding is near a matrix with very low rank.

 \subsection{Unfolding a ((1,2),(3,4))-Symmetric Tensor}

If ${\cal A} \in \R^{n\times n \times n \times n}$ is ((1,2),(3,4))-symmetric, then its
$[1,2]\times [3,4]$ unfolding has three important properties that are tabulated in {\sc Fig.}3.1.
\begin{figure}[h]
\begin{center}
\begin{tabular}{|l|c|}
\hline
\multicolumn{1}{|c|}{Symmetry in $\cal A$} & \multicolumn{1}{c|}{Implication for $A = {\cal A}_{\unfold{1}{2}{3}{4}}$}
\rule[-4pt]{0pt}{15pt}\\
\hline
${\cal A}(i_{1},i_{2},i_{3},i_{4}) = {\cal A}(i_{3},i_{4},i_{1},i_{2})$ &  $A = A^{T}$\rule{0pt}{14pt}\\
${\cal A}(i_{1},i_{2},i_{3},i_{4}) = {\cal A}(i_{2},i_{1},i_{3},i_{4})$ &  $\Pi_{\nn}A = A$ \rule{0pt}{14pt}\\
${\cal A}(i_{1},i_{2},i_{3},i_{4}) = {\cal A}(i_{1},i_{2},i_{4},i_{3})$ &  $A\Pi_{\nn} = A$\rule[-6pt]{0pt}{20pt}\\
\hline
\end{tabular}
\end{center}
\caption{Unfolding a ((1,2),(3,4)-Tensor}
\end{figure}
We refer to an $n^{2}$-by-$n^{2}$ matrix $A$ that satisfies $A=A^{T}$, $\Pi_{\nn}A = A$,
and $A = A\Pi_{\nn}$ as a ((1,2),(3,4))-{\em symmetric matrix}.
Such a matrix is also PS-symmetric because
the properties
$\Pi_{\nn}A = A$ and
 $A\Pi_{\nn} = A$ imply $\Pi_{\nn} A \Pi_{\nn} = A$. 
This permits us to say a little more about the block diagonalization in (2.8).
\begin{theorem}
If the $n^{2}$-by-$n^{2}$ matrix $A$ is ((1,2),(3,4))-symmetric,
then
\[
Q_{\nn}^{T} A Q_{\nn} \:=\: \left[ \begin{array}{cc} A^{(sym)} & 0 \\ 0 & 0 \rule{0pt}{15pt}\end{array} \right]
\]
where $Q_{nn}$ is defined by (2.6). In other words, the diagonal block $A^{(\skew)}$ in Theorem 2.1 is zero. Moreover,
\begin{equation}
A^{(\sym)} \;=\;  \Delta^{(\sym)}(u,u)\cdot \,A(u,u) \,\cdot \Delta^{(\sym)}(u,u)
\end{equation}where  $\Delta^{(\sym)}$ is defined by (2.15) and $u = \mbox{\tt sym}_{n}$ is given by (2.12).
\end{theorem}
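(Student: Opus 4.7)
The plan is to leverage Theorem 2.1 together with the explicit formulas (2.18) and (2.19) for $A^{(\sym)}$ and $A^{(\skew)}$, using the extra symmetry $\Pi_{\nn}A = A\Pi_{\nn} = A$ that is only available in the $((1,2),(3,4))$-symmetric case. The first step is to observe that $A\Pi_{\nn} = A$ and $\Pi_{\nn} = I_{n^{2}}(:,p)$ together imply that permuting the columns of $A$ by $p$ leaves $A$ unchanged, i.e., $A(i,p(j)) = A(i,j)$ for every $i,j$. In particular, this gives $A(v,p(v)) = A(v,v)$ and $A(u,p(u)) = A(u,u)$, where $u = \symn$ and $v = \skewn$.

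Plugging the first equality into (2.19) immediately yields $A^{(\skew)} = A(v,v) - A(v,p(v)) = 0$, which is the first assertion. Combined with the block-diagonalization $Q_{\nn}^{T}AQ_{\nn} = \mbox{diag}(A^{(\sym)},A^{(\skew)})$ from Theorem 2.1, this gives the claimed block structure. Plugging the second equality into (2.18) gives
\[
A^{(\sym)} \:=\: \Delta^{(\sym)}(u,u)\cdot \frac{A(u,u)+A(u,u)}{2}\cdot \Delta^{(\sym)}(u,u) \:=\: \Delta^{(\sym)}(u,u)\cdot A(u,u)\cdot \Delta^{(\sym)}(u,u),
\]
which is (3.1).

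As a sanity check I would also derive $A^{(\skew)} = 0$ directly from $\Pi_{\nn}Q_{\nn}^{(\skew)} = -Q_{\nn}^{(\skew)}$: since $\Pi_{\nn}A = A$,
\[
A^{(\skew)} = [Q_{\nn}^{(\skew)}]^{T}AQ_{\nn}^{(\skew)} = [Q_{\nn}^{(\skew)}]^{T}\Pi_{\nn}AQ_{\nn}^{(\skew)} = [\Pi_{\nn}Q_{\nn}^{(\skew)}]^{T}AQ_{\nn}^{(\skew)} = -A^{(\skew)},
\]
so $A^{(\skew)}=0$. This alternative route avoids appealing to (2.19) and makes the role of the extra symmetry transparent. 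There is no real obstacle here: the result is essentially a one-line consequence of the machinery built up in \S2, and the only thing worth being careful about is the bookkeeping that translates $A\Pi_{\nn}=A$ into the entrywise identity $A(i,p(j))=A(i,j)$ needed to collapse the symmetrized average in (2.18) and to zero out the difference in (2.19).
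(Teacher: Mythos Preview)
Your proof is correct and essentially matches the paper's. The paper proves $A^{(\skew)}=0$ via the direct $Q_{\nn}^{(\skew)}$ computation that you give as your ``sanity check'' (using $A\Pi_{\nn}=A$ rather than $\Pi_{\nn}A=A$, which is immaterial since $\Pi_{\nn}$ is symmetric), and it obtains (3.1) exactly as you do, by noting $A(u,p(u))=A(u,u)$ in (2.18); your leading argument via (2.19) is just the same entrywise identity applied one more time.
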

\begin{proof}
Using (2.11) and the properties
$A\Pi_{\nn} = A$  
and $\Pi_{nn}Q_{\nn}^{(\skew)} = -Q_{\nn}^{(\skew)}\rule{0pt}{20pt}$, we have
\begin{eqnarray*}
A^{(\skew)} &=& 
Q_{\nn}^{(\skew)}\rule{0pt}{5pt}^{T}A Q_{\nn}^{(\skew)}
\:=\:
Q_{\nn}^{(\skew)}\rule{0pt}{5pt}^{T}(A\Pi_{\nn}) Q_{\nn}^{(\skew)}  \rule{0pt}{14pt}\\
&=&
Q_{\nn}^{(\skew)}\rule{0pt}{5pt}^{T}A(\Pi_{\nn}Q_{\nn}^{(\skew)}) \rule{0pt}{14pt} 
\:=\:
-Q_{\nn}^{(\skew)}\rule{0pt}{5pt}^{T}A Q_{\nn}^{(\skew)} \:=\:-A^{(\skew)}\rule{0pt}{14pt}
\end{eqnarray*}
Thus, $A^{(\skew)} = 0$. Equation (3.1)  follows by noting that $A(u,p(u)) = A(u,u)$ in (2.18).
\end{proof}

\bigskip

\noindent
With this added bit of structure we can construct a representation
that is more abbreviated than what is laid out in Figure 2.1 for matrices that are
merely PS-symmetric.
Observe in {\sc Fig 3.2} that only a  single half-sized factorization is required.
\begin{figure}[h]
\begin{center}
\begin{tabular}{|ll|}
\hline
\multicolumn{2}{|c|}{Representing a ((1,2),(3,4))-Symmetric Matrix $A$ that is Positive Semidefinite $\rule[-5pt]{0pt}{16pt}$}\\
\hline
1. & Form $A^{(\sym)}$ using (3.1). \rule{0pt}{14pt}\\
2. & Compute the pivoted Cholesky factorization of $A^{(\sym)}$. See (2.26) and (2.27). \rule{0pt}{14pt}\\
\hline
\end{tabular}
\end{center}
\caption{Computing the representation $\{L^{(\sym)},P^{(\sym)}\}$of a ((1,2),(3,4))-Symmetric Matrix}
\end{figure}
The impact of the set-up overhead in the first step is discussed in \S4.

\subsection{An Example}

The four-index {\em Electron Repulsion Integral} (ERI) tensor ${\cal A}\in \mathbb{R}^{n\times n\times n\times n}$ is defined by 
\begin{equation}
  {\cal A}(i_{1},i_{2},i_{3},i_{4}) 
\;=\; \int_{\mathbb{R}^3} \int_{\mathbb{R}^3} \frac{\phi_{i_{1}}(\textbf{r}_{1}) \phi_{i_{2}}(\textbf{r}_1)\phi_{i_{3}}(\textbf{r}_2)\phi_{i_{4}}(\textbf{r}_2)}{\|\textbf{r}_1-\textbf{r}_2\|} d\textbf{r}_1 d\textbf{r}_2
\label{eq:34}
\end{equation}
where a set of basis functions $\{\phi_k\}_{1\leq k\leq n}$ is given such that $\phi_k\in H^1(\mathbb{R}^3)$. In general, $\phi_k$ are complex basis functions but in this paper we assume real basis functions. The simplest real basis functions $\phi_k$ are Gaussians parametrized by the exponents \inv{\alpha_k}{} and centers \inv{\textbf{r}_k}{3} for $k=1,\dots,n$, e.g.
\[\phi_k(\textbf{r})=g_k(\textbf{r}-\textbf{r}_k)=(2\alpha_k/\pi)^{3/4}e^{-\alpha_k \|\textbf{r}-\textbf{r}_k\|^2}\]
Typically, more sophisticated basis functions are composed from linear combinations of these simple Gaussians \cite{gill}.

The ERI tensor is essential to electronic structure theory and ab initio quantum chemistry. Efficient numerical algorithms for computing and representing this tensor have been a major preoccupation for researchers interested  in ab initio quantum chemistry \cite{BL,HPS,RW,onealChol}.

Notice that for the ERIs,
\[{\cal A}(i_1 , i_2 , i_3 , i_4 ) = {\cal A}(i_3 , i_4 , i_1 , i_2 )\]
because the order of integration does not matter. Also, due to the commutativity of scalar multiplication:
\[{\cal A}(i_1 , i_2 , i_3 , i_4 ) = {\cal A}(i_2, i_1 , i_3 , i_4) = {\cal A}( i_1 , i_2 , i_4 ,i_3)={\cal A}( i_2 , i_1 , i_4 ,i_3)\]

Thus, if $\cal A$ is the tensor defined by \eqref{eq:34}, then it is ((1,2),(3,4)) symmetric. Moreover,
$A = {\cal A}_{\unfold{1}{2}{3}{4}}$ is positive definite since it is a Gram matrix for the product-basis set $\{\phi_i\phi_j\}$ in the Coulomb metric $\langle\cdot,\frac{1}{\|\textbf{r}_1-\textbf{r}_2\|}\cdot\rangle$.
See \cite{KKS} for details.

\subsection{A Structured Multilinear Product}

A structured version of (3.2) arises in the  Hartree-Fock method, an important technique
for those concerned with the {\em ab initio} calculation of electronic structure.
Szabo and Ostlund \cite{SO} is an excellent general reference in this regard.
For an accurate treatment of electronic correlation effects, it is convenient to transform the ERI tensor from the atomic orbital basis $\{\phi_k(\textbf{r})\}$ to the molecular orbital basis $\{\psi_k(\textbf{r})\}$. The change of basis is defined by
\begin{equation}
  \psi_p \:=\: \sum_{q=1}^n X(p,q)\phi_q \hspace{2em} p=1,2,\dots, n
\end{equation}
where \inm{X}{n}{n} is given.
The goal is  to transform the atomic orbital basis ERI tensor ${\cal A}$ into the following molecular orbital basis ERI tensor ${\cal B}\in \mathbb{R}^{n\times n\times n\times n}$ defined by
\begin{equation}
  {\cal B}(i_{1},i_{2},i_{3},i_{4}) \:=\: \int_{\mathbb{R}^3} \int_{\mathbb{R}^3} \frac{\psi_{i_{1}}(\textbf{r}_1) \psi_{i_{2}}(\textbf{r}_1)\psi_{i_{3}}(\textbf{r}_2)\psi_{i_{4}}(\textbf{r}_2)}{\|\textbf{r}_1-\textbf{r}_2\|} d\textbf{r}_1 d\textbf{r}_2.
\end{equation}
By substituting (3.3) into (3.2) it is easy to show that this tensor is given by

\begin{equation}
 {\cal B}(i_{1},i_{2},i_{3},i_{4}) \;=\!
 \sum_{j_{1},j_{2},j_{3},j_{4}=1}^{n}  \!\!
{\cal A}(j_{1},j_{2},j_{3},j_{4})
X(i_{1},j_{1}) X(i_{2},j_{2})X(i_{3},j_{3})X(i_{4},j_{4}).
\end{equation}

To analyze and exploit the structure of this computation, we start with the fact that it is
a special case of 
the general multilinear product
\begin{equation}
 {\cal B}(j_{1},j_{2},j_{3},j_{4}) =
 \sum_{i_{1},i_{2},i_{3},i_{4}=1}^{n} 
{\cal A}(i_{1},i_{2},i_{3},i_{4})
X_{1}(i_{1},j_{1}) X_{2}(i_{2},j_{2})X_{3}(i_{3},j_{3})X_{3}(i_{4},j_{4}).
\end{equation}
It can be shown that 
\begin{eqnarray}
{\cal B}_{\unfold{1}{2}{3}{4}} &=& (X_{2} \T X_{1})\,{\cal A}_{\unfold{1}{2}{3}{4}}\,(X_{4} \T X_{3})^{T}
\end{eqnarray}
See \cite[p.728-9]{gvl4}. Thus, if
\begin{equation}
 {\cal B}(i_{1},i_{2},i_{3},i_{4}) \;=\!
 \sum_{j_{1},j_{2},j_{3},j_{4}=1}^{n}  \!\!
{\cal A}(j_{1},j_{2},j_{3},j_{4})
X(i_{1},j_{1}) X(i_{2},j_{2})X(i_{3},j_{3})X(i_{4},j_{4}).
\end{equation}
then it follows  that
\[
{\cal B}_{\unfold{1}{2}{3}{4}} \:=\: (X \T X)\,{\cal A}_{\unfold{1}{2}{3}{4}}\,(X \T X)^{T}.
\]
It is easy to verify that if the tensor $\cal A$ is ((1,2),(3,4))-symmetric then the tensor $\cal B$ is  also ((1,2),(3,4))-symmetric.
Indeed,
\begin{eqnarray*}
\Pi_{\nn} {\cal B}_{\unfold{1}{2}{3}{4}} &=& \Pi_{\nn}(X \T X)\,{\cal A}_{\unfold{1}{2}{3}{4}}\,(X \T X)^{T}\\
&=&(X \T X)(\Pi_{\nn}\,{\cal A}_{\unfold{1}{2}{3}{4}})\,(X \T X)^{T}\rule{0pt}{15pt}\\
&=& (X \T X)\,{\cal A}_{\unfold{1}{2}{3}{4}}\,(X \T X)^{T}\:=\:{\cal B}_{\unfold{1}{2}{3}{4}} .
\rule{0pt}{15pt}
\end{eqnarray*}
where we  used the fact that
$\Pi_{\nn}(M_{1} \T M_{2}) = (M_{2} \T M_{1})\Pi_{\nn}$ for all \inm{M_{1},M_{2}}{n}{n}. See \cite[p.27]{gvl4}.
Likewise, 
${\cal B}_{\unfold{1}{2}{3}{4}} \Pi_{\nn} = {\cal B}_{\unfold{1}{2}{3}{4}}. $
Since
${\cal B}_{\unfold{1}{2}{3}{4}} $ is obviously symmetric, we see that this matrix (and hence the tensor $\cal B$) is
 ((1,2),(3,4)) symmetric.

\section{Discussion}

To check out the ideas presented in the previous sections, we implemented the method displayed in {\sc Fig. 3.2}  and tested it on the
low-rank 
((1,2),(3,4))-symmetric matrices that arise from ERI tensor unfoldings.

\subsection{Low Rank}

It is well known in the TEI setting that ${\cal A}_{\unfold{1}{2}{3}{4}}$ is very close to a matrix whose rank in $O(n)$. 
Indeed,
R{\o}eggen and Wisl{\o}ff-Nilssen \cite{RW} show  that 
$\mbox{rank}_{10^{-p}}(A) \:\approx \: pn$
where $\mbox{rank}_{\delta}(A)$ is the number of $A$'s singular values that are greater than $\delta$.
Affirmations of this heuristic can be found in O'Neal and Simons \cite{onealChol}. For insight we  graphically display the
eigenvalue decay for two simple molecules in {\sc Fig 3.3}. {\sc Fig} 3.4 is  a table of ranks for some larger problems.
See \cite{RW} for more details on the low rank structure.
\begin{figure}[h]
\begin{center}
\begin{tabular}{cc}
\small
$\mbox{H}_{2}\mbox{O}\quad (n=12) $ & $\mbox{C}_{2}\mbox{H}_{4}\quad (n =6)$\\
\includegraphics[height=1.5in]{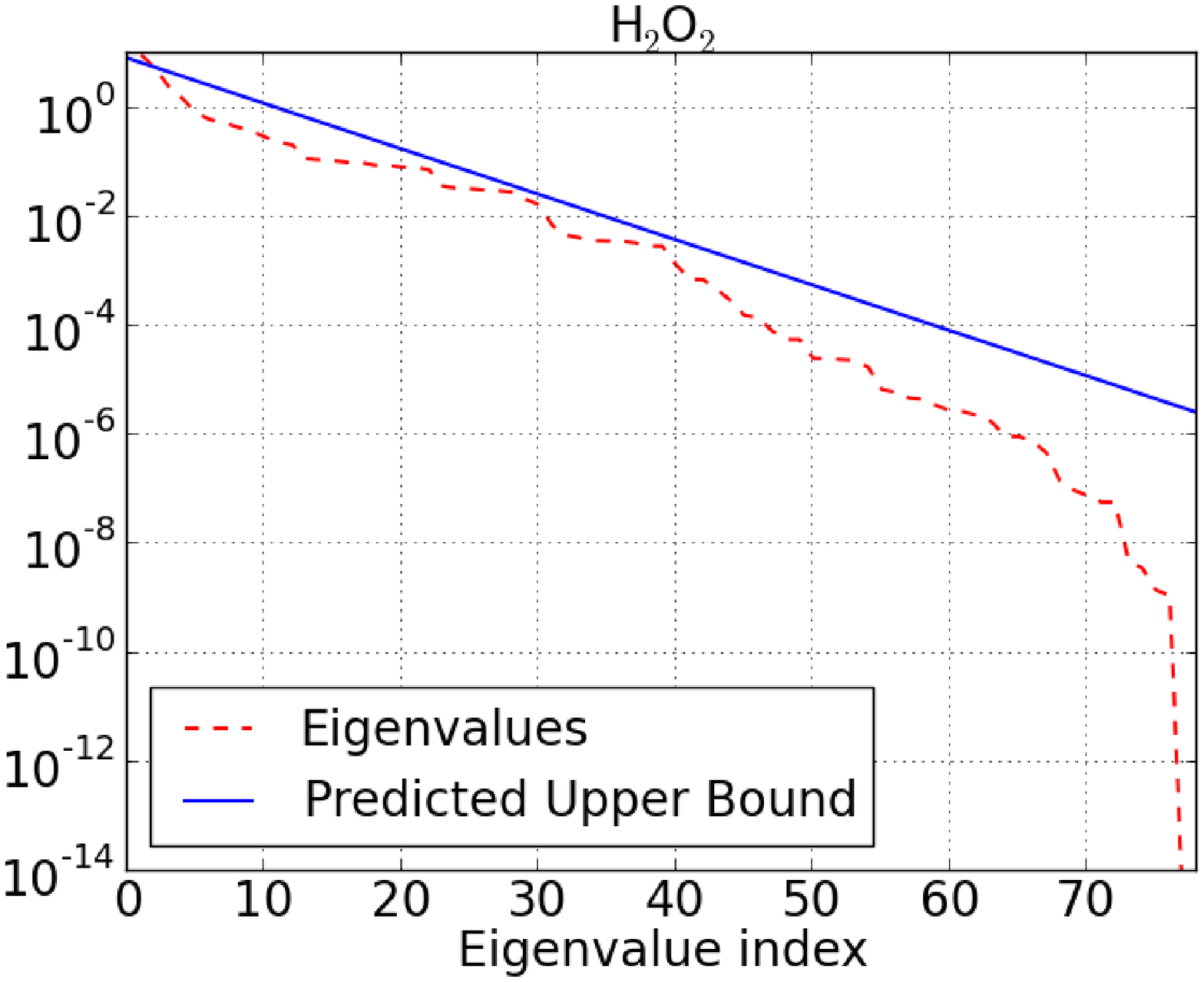}
&
\includegraphics[height=1.5in]{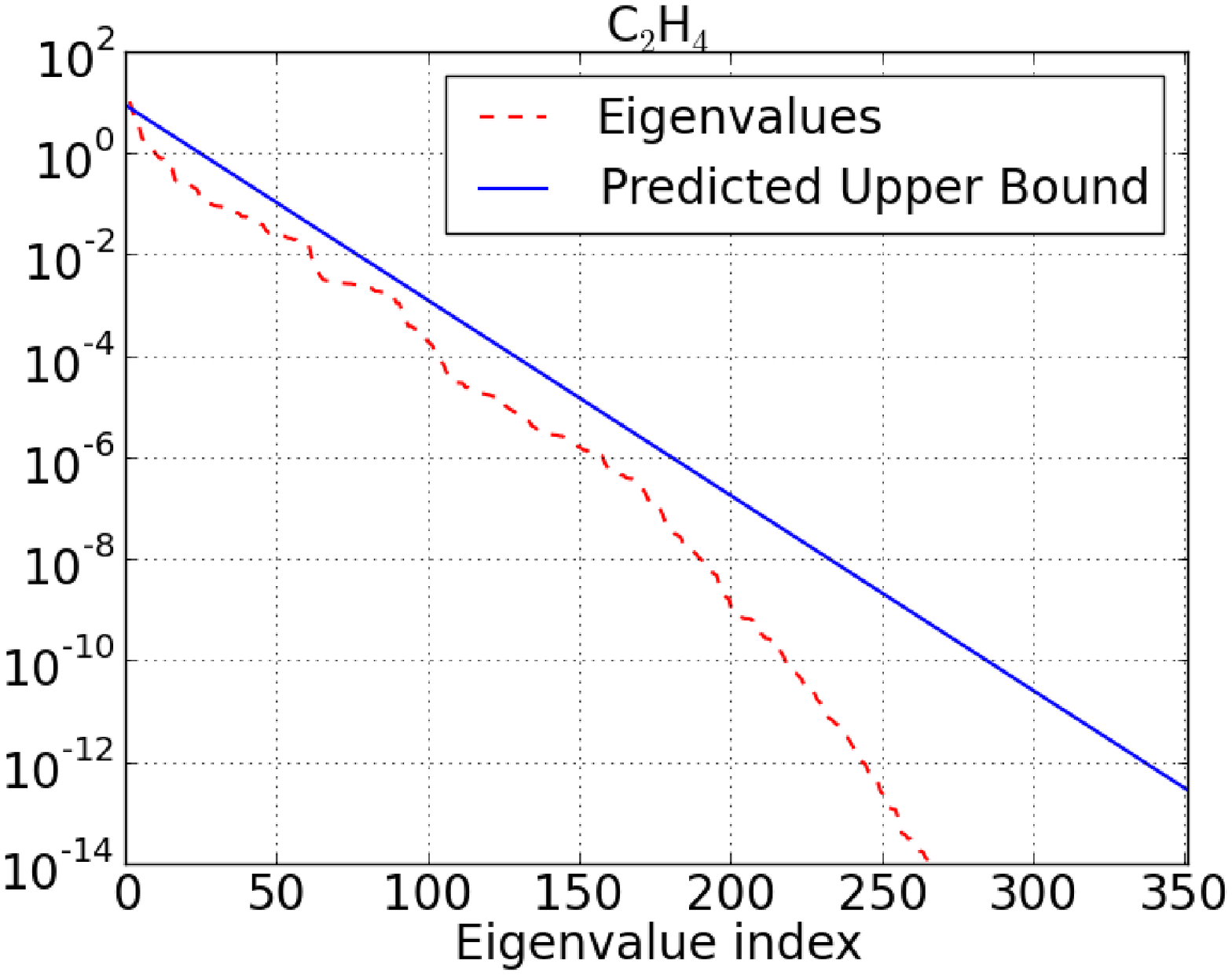}
\end{tabular}
\end{center}
\caption{Eigenvalue decay of ERI matrices generated by the Psi4 Quantum Chemistry Package \cite{psi4}}
\end{figure}
\begin{figure}
\begin{center}
\begin{tabular}{c|rrc}
Molecule & \multicolumn{1}{c}{$n^2$} & \multicolumn{1}{c}{$n$} & \multicolumn{1}{c}{$\mbox{rank}_{10^{-6}}(A)$ }\\
\hline
HF & 1190 &  34 & $\approx 200$  \rule{0pt}{11pt} \\
$\mbox{NH}_{3}$ & 2304 & 48& $\approx  300$   \rule{0pt}{11pt} \\
$\mbox{H}_{2}\mbox{O}_{2}$ & 4624 & 68& $\approx 400$  \rule{0pt}{11pt} \\
$\mbox{N}_2\mbox{H}_4$ & 6724 & 82 & $\approx 500 $  \rule{0pt}{11pt}  \\
$\mbox{C}_2\mbox{H}_5\mbox{OH}$ & 15129 & 123 & $\approx 750 $ \rule{0pt}{11pt} 
\end{tabular}

\caption{Confirmation that $\mbox{rank}_{10^{-6}} \approx 6n$}
\end{center}
\end{figure}

\subsection{A Lazy Evaluation Strategy}

In their highly cited paper  Beebe and Linderberg \cite{BL} demonstrate that by making use of the low rank and positive definiteness of the two-electron integral matrix it is possible to reduce the number of integral evaluations necessary to factorize the matrix, as well as reduce the complexity of a major bottleneck of computational quantum chemistry called the two-electron integral four-index transformation. 
The key idea is to implement the pivoted Cholesky factorization algorithm with lazy evaluation--off-diagonal entries (integrals) are only computed when necessary. To illustrate, after (say) $ k$ steps  of the process on an $N$-by-$N$ matrix $A$,
we have the following partial factorization
\begin{equation}
P_{k}AP_{k}^{T} \:=\: 
\left[ \begin{array}{cc} L_{11} & 0  \\ L_{21} & I_{\smallN-k} \end{array} \right]
\left[ \begin{array}{cc} I_{k} & 0 \\ 0 & \tilde{A} \end{array} \right]
\left[ \begin{array}{cc} L_{11} & 0  \\ L_{21} & I_{\smallN-k} \end{array} \right]^{T}
\end{equation}
Ordinarily, the matrix $\tilde{A}$ is fully available, its diagonal is scanned for the largest entry, and then a
$PAP^{T}$-type of permutation update is performed that brings this largest diagonal entry to the $(k+1,k+1)$ position.
The step is completed by carrying a rank-1 update of the permuted $\tilde{A}$ and this   renders the next column of $L$.
The lazy evaluation version of this recognizes that we do not need the off-diagonal values in $\tilde{A}$ to determine the
pivot. {\em Only the diagonal of  $\tilde{A}$ is necessary to carry out the pivot strategy.} The recipe for the next column 
of $L$ involves (a) previously computed columns of $L$ and (b) entries from that column of $A$ which is associated with the
pivot. It is then an easy matter to update the diagonal of the current $\tilde{A}$ to get the diagonal of the ``next''
$\tilde{A}$. The importance of this lazy-evaluation strategy is that $O(Nk)$ integral evaluations (i.e., $a_{ij}$ evaluations)
are necessary to get through
the $k$-th step.  If the largest diagonal entry in $\tilde{A}$ is less than a small tolerance $\delta$, then because
$\tilde{A}$ is positive definite, 
$\norm{\tilde{A}} = O(\delta)$ and we have the ``right'' to regard $A$ as a rank-$k$ matrix. The overall technique can 
be seen as a combination of Gaxpy-Cholesky, which only needs $A(k:n,k)$ in step $k$ and outer product Cholesky
which is traditionally used in situations that involve diagonal pivoting. 
R{\o}eggen and Wisl{\o}ff-Nilssen \cite{RW} explore the numerical rank of the two-electron integral matrix, and investigate
 the relationship of various thresholds and electronic properties. 
See also  \cite{HPS, KKS}.

While on the subject of lazy evaluation, it is important to stress that the matrix entries in $A^{(\sym)}$ are essentially
entries from $A$. See (3.2). Thus,  when we apply our implementation of pivoted Cholesky to to $A^{(\sym)}$ with lazy evaluation, there
are no extra $a_{ij}$ computations. In other words, our method requires half the work, half the storage, and half the electronic repulsion integrals
as traditional Cholesky-based methods. The table displayed in {\sc Fig 4.3} confirms these observations

\begin{figure}[h]
  \centering
  \begin{tabular}{c|c|c|c|c}
    & $\begin{array}{c}n=44\\r=345\end{array}$ & $\begin{array}{c}n=72\\r=560\end{array}$ & $\begin{array}{c}n=88\\r=720\end{array}$ & $\begin{array}{c}n=116\\r=918\end{array}$ \\
    \hline
    $T_{u}/T_{s}$ & 1.84  &1.90  &1.89 &1.93 \rule{0pt}{14pt}\\
    $S_u/S_s$ & 1.95 & 1.97 & 1.97 & 1.98 \rule{0pt}{14pt}\\
    $E_{u}/E_{s}$ & 1.95  & 1.97  & 1.97 & 1.98 \rule{0pt}{14pt}\\
  \end{tabular}
  
  \caption{$T_{u}$ and $T_{s}$ are the time in seconds to factorize $A$ and $A(u,u)$ respectively; $S_{u}$ and $S_{s}$ are the number of bytes allocated to factorize $A$ and $A(u,u)$ respectively; $E_{u}$ and $E_{s}$ are the number of ERI evaluations to factorize $A$ and $A(u,u)$ respectively. Results are based on running Psi4 Lazy Evaluation pivoted Cholesky on the ERI matrix of four different molecules on a single core of a laptop Intel(R) Core(TM) i5-3210M CPU @ 2.50GHz.}
 
\end{figure}
\subsection{Conclusion}
We have used a simple example of multiple symmetries to explore a computational framework that involves
block diagonalization and the pivoted Cholesky factorization. Items on our research agenda include
the extension of these ideas to more intricate forms
of multiple symmetry that arise in higher-order tensor problems and to apply this approach to improve the performance of the Hartree-Fock method in quantum chemistry. Intelligent  data structures and blocking will certainly be part of the picture. Ragnarsson and Van Loan develop a block tensor computation
framework in \cite{RV2}. If multiple symmetries are present, then as in the
matrix case tensions arise between compact storage schemes and ``layout friendly'' matrix multiplication formulations.
See Epifanovsky et al \cite{epifanBlock}, 
and Solomonik, Matthews, Hammond, and Demmel, \cite{demmelCyclops}. 
In \cite{koldaSym} Schatz, Low, van de Geijn, and Kolda   discuss a blocked data structure for symmetric tensors, partial symmetry, and the prospect of building a general purpose library for multi-linear algebra computation. They also discuss a blocking strategy for a symmetric multilinear product. 

\section{Acknowledgments}
This work was partially supported by the Ronald E. McNair Post-baccalaureate Scholar program.

\end{document}